\theoremstyle{plain}
\newtheorem{theorem}{Theorem}
\newtheorem{lemma}{Lemma}
\newtheorem{conj}{Conjecture}
\theoremstyle{definition}
\newtheorem{examp}{Example}
\newcommand{\divides}{\bigm|}
\newcommand{\ndivides}{%
  \mathrel{\mkern.5mu 
    \ooalign{\hidewidth$\big|$\hidewidth\cr$\nmid$\cr}%
  }%
}
\newtheorem{case}{Case}
\newcommand{\ndiv}{\hspace{-4pt}\not|\hspace{2pt}}
\journal{Discrete Applied Mathematics}
\begin{document}

\begin{frontmatter}

\title{Word Length Perturbations in Certain Symmetric Presentations of Dihedral Groups}

\author[label1]{Michael P.\ Allocca}
\author[label2]{Jason M.\ Graham}
\author[label3]{Candice R.\ Price}
\author[label4]{Shannon N.\ Talbott}
\author[label2]{Jennifer F.\ Vasquez}

\address[label1]{Department of Mathematics and Computer Science, Muhlenberg College, Allentown, PA, USA}
\address[label2]{Department of Mathematics, University of Scranton, Scranton,  PA, USA}
\address[label3]{Department of Mathematics, University of San Diego, San Diego, CA, USA}
\address[label4]{Department of Mathematics and Computer Science, Moravian College, Bethlehem, PA, USA}

\begin{abstract}
Given a finite group with a generating subset there is a well-established notion of length for a group element  given in terms of its minimal length expression as a product of elements from the generating set. Recently,  certain quantities called $\lambda_{1}$ and $\lambda_{2}$ have been defined that allow for a precise measure of how stable a group is under certain types of small perturbations in the generating expressions for the elements of the group. These quantities provide a means to measure differences among all possible paths in a Cayley graph for a group, establish a group theoretic analog for the notion of stability in nonlinear dynamical systems, and play an important role in the application of groups to computational genomics.  In this paper, we further expose the fundamental properties of  $\lambda_{1}$ and $\lambda_{2}$ by establishing their bounds when the underlying group is a dihedral group.   An essential step in our approach is to completely characterize so-called symmetric presentations of the dihedral groups, providing insight into the manner in which $\lambda_{1}$ and $\lambda_{2}$ interact with finite group presentations. This is of interest independent of the study of the quantities $\lambda_{1},\; \lambda_{2}$. Finally, we discuss several conjectures and open questions for future consideration.
\end{abstract}

\begin{keyword}
Dihedral groups, Group presentations, Word length, Generating set, Minimal length
\end{keyword}

\end{frontmatter}

\section{Introduction and Background}\label{sec:intro}

For a finite group, $G$, with generating set, $S$, there is a notion of length for any element $g\in G$: write $g$ as a product of elements (\emph{i.e.}, as a word) from $S$, using as few generators as possible. Then the length of $g$ is the number of generators appearing in a minimal expression (\emph{i.e.}, smallest word expression) for $g$. Furthermore, this notion of length provides a related notion of distance, or metric between two elements $g,g'$ of $G$. From a geometric perspective, this is related to distances along paths of the Cayley graph for $G$ associated with the generating set $S$.

Let $G$ be a finite group with symmetric generating set $S = \{s_{1},s_{2},\ldots,s_{n}\}$, where symmetric means $1 \not \in S$ and $s \in S \Rightarrow s^{-1} \in S$. One reason to consider symmetric generating sets is to ensure that a group element and its inverse have the same length. A word from $S$ in $G$ is any expression formed by taking products of elements in $S$. We refer to the elements of $S$ as letters. Then, for any element $g\in G$, we define the length $l_{S}(g)\in[0,\infty)$ of $g$ with respect to $S$ to be the minimal number of letters in $S$ for which $g$ can be written as a word from $S$. Note that $l_{S}(g) = 0 \Leftrightarrow g = 1$. Recently, the following quantities have been defined \cite{MoultonSteel2012}:
\begin{align}
  \lambda_{1}(G,S)&:=  \max_{g\in G,\ s\in S}\{l_{S}(gsg^{-1})\},\label{eq:lambdaOneDef} \\
  \lambda_{2}(G,S)&:=  \max_{g\in G,\ s,s'\in S}\{l_{S}(gss'g^{-1})\}.\label{eq:lambdaTwoDef}
\end{align}

The quantities $\lambda_{1},\lambda_{2}$ serve to precisely address the questions: given a word, what is the effect on its length after either the deletion of one letter (quantified by $\lambda_{1}$), or the replacement of one letter by another distinct letter (quantified by $\lambda_{2}$)? This presents an analogy between how large these measures can be, and the sensitivity of nonlinear dynamical systems to small perturbations in initial conditions, \emph{i.e.}, the so-called ``butterfly effect" \cite{MoultonSteel2012}.

There are several initial observations about $\lambda_{1}$ and $\lambda_{2}$ to note.  First, there is the bound
\begin{align}
  \lambda_{2}(G,S) \leq 2 \lambda_{1}(G,S), \label{eq:boundOne}
\end{align}
which holds for any group $G$ and symmetric generating set $S$, see \cite{MoultonSteel2012}. Second, the values for $\lambda_{1}$ and $\lambda_{2}$ in two extreme cases, either when $G$ is commutative, or when $S$ is as large as possible can easily be derived:

 Suppose that $G$ is a nontrivial commutative group and $S$ is any symmetric generating set. Then, $\lambda_{1}(G,S) = 1$ and $\lambda_{2}(G,S) \leq 2$. This follows because $sg = gs$ for any $g\in G$, $s\in S$ thus giving
  \begin{align*}
    l_{S}(gsg^{-1}) &= l_{S}(s) = 1.
  \end{align*}
Therefore, $\lambda_{1}(G,S) = 1$.

On the other hand,
  \begin{align*}
    l_{S}(gss'g^{-1}) &= l_{S}(ss') \leq 2.
  \end{align*}
Thus, we see that $\lambda_{2}(G,S) \leq 2$, and note that $\lambda_{2}(G,S)$ may be equal to 1 (for example, in the case that $ss'\in S$ for all $s,s'\in S$).

Now suppose that $G$ is a nontrivial finite group and $S = G - \{1\}$. That is, we take as our symmetric generating set all elements of $G$ except the identity. Then, for any $g\in G$, $s \in S$ we have that $g^{-1}sg = h$ for some $h \in G$. Now, either $h=1$ or $h\in S$. Thus, either $l_{S}(g^{-1}sg) = 0 \text{ or } 1$. Thus, we have that
  \[ \lambda_{1}(G,S) = \max_{g\in G, s\in S}l_{S}(g^{-1}sg)  = 1, \]
  whenever $S=G - \{1\}$.
The same basic argument can be used to show that $\lambda_{2}(G,S) = 1$ whenever $S=G - \{1\}$. Note that this example exhibits the feature that a large generating set $S$ gives small values of $\lambda_1$ and $\lambda_2$.

Finally, the main result of \cite{MoultonSteel2012}, Theorem 1 from that paper, establishes bounds on $\lambda_{1}$ and $\lambda_{2}$ in the cases where $G$ is the symmetric group $\Sigma_{n}$ of order $n!$ and $S$ is one of three distinct generating sets, the transpositions, the reversals, and the Coxeter generators. This together with the aforementioned observations already shows that the values of $\lambda_{1},\lambda_{2}$ are highly dependent on the specific choice of, and specific properties of both the group $G$ and the particular symmetric generating set $S$.

We note that an important source of motivation for the definitions of the $\lambda_{i}$, $i=1,2$, comes from computational approaches to the study of genome rearrangements. For some time now, combinatorial methods and finite groups such as permutation groups have played a major role in the modeling and exploration of problems arising in combinatorial genomics, for more on such topics see \cite{BafnaPevzner1996,Bergeron2005,EgriNagy2014,Hannenhalli1999,MoultonSteel2012}. The significance in relation to \cite{MoultonSteel2012} is that the notion of distance described there relates to the evolutionary distance between species based on differences in their respective genomes.

In addition, it is the case that many groups, including $\Sigma_{n}$, may be described as finitely presented groups \cite{coxeter1980,johnson1997,Smith2000}. Furthermore, the so-called group of circular permutations, which is of particular relevance to computational genomics \cite{EgriNagy2014}, can be described by adding a relation to the usual presentation for the affine symmetric group. Thus, when $G$ is a finitely presented group, there is interest in computing the values for $\lambda_{i}$, $i=1,2$ and investigating questions such as how do the relations in a presentation for $G$ affect the values of the $\lambda_{i}$, $i=1,2$. To date, very few efforts have been made in this direction.

In this work, we compute bounds or values for $\lambda_{1}$ and $\lambda_{2}$ for the dihedral groups $D_n$, $n>2$, which are of course well known to be noncommutative groups of order $2n$, see \emph{e.g.}\ \cite{DummitFoote2004,Smith2000}. Specifically, we consider dihedral groups given as finitely presented groups for several small generating sets that have the additional feature of being symmetric as described below, and also examine how the $\lambda_{i}$ values vary in both $n$ and the nature of the presentation. First, this involves developing a thorough understanding of the possible symmetric presentations for $D_{n}$, which we do completely for symmetric generating sets of cardinality less than or equal to three. Next, it must be established how the relations affect computing values or bounds for the $\lambda_{i}$. An important step along the way is to understand when two or more particular concrete realizations of a presentation in terms of specific elements of $D_{n}$ correspond to the same abstract presentation.

There are several reasons for considering the values of $\lambda_{1}$ and $\lambda_{2}$ for the dihedral groups. First, some computations are amenable to a direct approach which help to provide insight into certain aspects of the quantities $\lambda_{1}$ and $\lambda_{2}$. For example, when computing the values of $\lambda_{1}$ and $\lambda_{2}$ one is essentially concerned with the lengths of conjugates of elements of $S$ and conjugates of products of pairs of elements of $S$. That is, we want to know the lengths of elements of the form $g^{-1}sg$ and $g^{-1}ss'g$ where $g\in G,\ s,s'\in S$. Now, it is not necessarily the case that this produces every element of the group. This can already been seen in the previous example where $G$ is commutative and $S$ is any symmetric generating set. The computations contained herein for the dihedral groups further illustrate this point. Second, the geometric properties of the dihedral groups \cite{carter2009} provide intuition valuable in the computation of the quantities $\lambda_{1}$ and $\lambda_{2}$.

The remainder of the paper is organized as follows: In section \ref{presClass}, we develop a complete classification of all symmetric presentations of $D_{n}$ with the cardinality of the generating set less than or equal to three. In section \ref{vonDyckIso}, we establish some intermediate results used in the construction of bounds for the values of the $\lambda_i$ that are also of interest independent of the quantities $\lambda_i$. In section \ref{lambdaBounds}, we carry out the construction of the bounds for $\lambda_i$ using the aforementioned presentations, thus establishing the main results of the paper.   After concluding remarks in section \ref{conclusion}, we provide ancillary results connected to section \ref{presClass} in an appendix.

\section{Classification of Small Presentations for $D_{n}$}
\label{presClass}

In this section, we give a complete classification of all symmetric presentations of the dihedral groups with the cardinality of the generating set less than or equal to three. This is carried out largely by looking at concrete realizations of order two and order three symmetric generating subsets of $D_{n}$, with its standard description, see (\ref{eq:DnDescription}).

We begin by recalling some basic features of the dihedral groups in order to establish notation and perspective. The dihedral group $D_{n}$ is the group of order $2n$ with the following properties: $D_{n}$ contains an order $n$ cyclic subgroup and $n$ distinct elements, each of order two, none of which belongs to the order $n$ cyclic subgroup. From a geometric perspective $D_{n}$ represents the symmetries of a regular $n$-gon under rigid motions. Often, $D_{n}$ is described by the presentation
\begin{equation}
D_{n} = \left<r,f \mid r^{n}=f^{2}=1 , rf=fr^{-1}  \right>, \label{eq:DnPresA}
\end{equation}
\emph{e.g.}\ \cite{DummitFoote2004}. Then, the dihedral groups can be written out as the $2n$ distinct elements
\begin{align}
   D_{n}&=\{1,r,r^2,\ldots , r^{n-1},f,rf,r^2f,\ldots,r^{n-1}f\}. \label{eq:DnDescription}
\end{align}
We also have that
\begin{align}
  (r^{i})^{-1}&=r^{n-i}, \label{eq:rInverses} \\
  rf&=fr^{-1}=fr^{n-1}, \label{eq:noncommute}\\
  (r^{i}f)(r^{i}f) &= 1,\ \text{for any integer value of $0\leq i \leq n$}. \label{eq:flipSymmetry}
\end{align}
 We refer to the elements of the form $r^{i}f$ as involutions, or involution elements. In fact, these are the only involution elements besides $r^{\frac{n}{2}}$, which only exists if $n$ is even. See \cite{DummitFoote2004} for more information regarding the dihedral groups, which we note can also be viewed as a special class of Coxeter groups \cite{Humphreys1990}. All of the equations (\ref{eq:rInverses})-(\ref{eq:flipSymmetry}) will be used to facilitate the computations in the next sections.

Note that, while $\{r,f\}$ generates $D_{n}$, it is not symmetric. Thus, we now proceed to classify all presentations of $D_{n}$ where the generating set $S$ has cardinality less or equal to three and satisfies the symmetry condition that $s\in S \Rightarrow s^{-1} \in S$. We begin by classifying all symmetric generating sets with cardinality two, since there is no singleton $\{x\}$ such that there are relations on $x$ leading to a presentation of $D_{n}$.

\subsection{Classification of symmetric presentations using two elements}
\label{classSectionA}

Consider a set $S = \{x,y\}$. If $S$ is to be symmetric, then we must have either $xy=yx=1$, or $x^2=y^2=1$. However, in the first instance, $S = \{x,x^{-1}\}$ which only generates a cyclic group $\langle x \rangle$ and therefore is not $D_{n}$. Thus, if $S = \{x,y\}$ is a symmetric generating set for $D_{n}$ we must have that $x^2=y^2=1$. This implies that the product $xy$ must generate the order $n$ cyclic subgroup.

There is still the possibility that $x$ or $y$ is an element of order two and in the order $n$ cyclic subgroup of $D_{n}$. This occurs if and only if one of them is $r^{\frac{n}{2}}$ in the previous notation, which may only occur in the case that $n$ is even. But, this would then imply that $xy=yx$ which again does not generate $D_{n}$. Therefore, in the notation of (\ref{eq:DnDescription}), there are integers $a,b$ such that $x=r^{a}f$ and $y=r^{b}f$. Considering this leads to the following:

\begin{theorem}
\label{DnPres1b}
  $D_{n} = \left< x,y\mid \ x^{2} = y^{2} = 1, \ |xy| = n \right>$ completely classifies the symmetric presentations of $D_{n}$ with generating set $S=\{x,y\}$ of cardinality two.
\end{theorem}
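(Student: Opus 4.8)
The plan is to establish the statement in two directions. First comes \emph{soundness}: the group abstractly presented by $\langle x,y \mid x^2 = y^2 = 1,\ |xy| = n\rangle$ is isomorphic to $D_{n}$. Second comes \emph{completeness}: every symmetric generating set of cardinality two for $D_{n}$ satisfies precisely these relations, and these relations are also sufficient to guarantee generation, so that no other symmetric two-generator presentation can arise. The preceding discussion has already carried out the essential reduction, namely that any symmetric $S=\{x,y\}$ generating $D_{n}$ must consist of two involutions outside the order $n$ cyclic subgroup, whence $x = r^{a}f$ and $y = r^{b}f$ for some integers $a,b$ in the notation of (\ref{eq:DnDescription}).

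For soundness, let $\Gamma$ denote the abstractly presented group and set $\rho := xy \in \Gamma$. Using $x^2 = 1$ and $y^2 = 1$, a one-line computation gives
\[
  x\rho x^{-1} = x(xy)x = yx = (xy)^{-1} = \rho^{-1},
\]
so that $\langle \rho\rangle$ is normalized by $x$, with $\rho^{n}=1$ and $x^2 = 1$; moreover the identity $y = x\rho$ shows $\Gamma = \langle x,\rho\rangle$. These are exactly the defining relations (\ref{eq:DnPresA}) with $x$ in the role of $f$ and $\rho$ in the role of $r$, so every element of $\Gamma$ reduces to the normal form $\rho^{i}$ or $\rho^{i}x$ with $0 \le i < n$, whence $|\Gamma| \le 2n$. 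On the other hand, sending $x \mapsto f$ and $y \mapsto rf$ defines a surjection $\Gamma \twoheadrightarrow D_{n}$, since both images are involutions by (\ref{eq:flipSymmetry}) and together generate $D_{n}$. Comparing orders forces $|\Gamma| = 2n$, and hence $\Gamma \cong D_{n}$.

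For completeness, I would return to the concrete realization $x = r^{a}f$, $y = r^{b}f$. Applying (\ref{eq:noncommute}) repeatedly to move $f$ past powers of $r$, the product collapses to
\[
  xy = (r^{a}f)(r^{b}f) = r^{a-b},
\]
so $xy$ lies in the cyclic subgroup and has order $n/\gcd(a-b,n)$. Since $x$ conjugates $r$ to $r^{-1}$, the subgroup $\langle x,y\rangle = \langle x, xy\rangle$ is dihedral of order $2\bigl(n/\gcd(a-b,n)\bigr)$, so $\{x,y\}$ generates $D_{n}$ if and only if $\gcd(a-b,n)=1$, that is, if and only if $|xy| = n$. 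As $x^2 = y^2 = 1$ holds automatically for involutions, this shows every symmetric two-element generating set satisfies the stated relations, while conversely those relations are exactly what guarantee generation. Since the resulting abstract presentation is independent of the particular $a,b$, all such generating sets yield the single presentation claimed.

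The main obstacle is the soundness step, specifically ruling out that $x^2 = y^2 = 1$ together with $|xy| = n$ presents a group strictly larger than $D_{n}$. The cleanest resolution is the order count above: deriving $x\rho x^{-1} = \rho^{-1}$ and exhibiting the normal form $\{\rho^{i},\rho^{i}x\}$ caps $|\Gamma|$ at $2n$, after which the explicit surjection pins down the isomorphism. (Equivalently, one may invoke the standard fact that $\langle s,t \mid s^{2}=t^{2}=(st)^{n}=1\rangle$ is the Coxeter presentation of $D_{n}$.) The completeness direction, by contrast, reduces to the single computation $xy = r^{a-b}$ together with the generation criterion $\gcd(a-b,n)=1$, and presents no real difficulty.
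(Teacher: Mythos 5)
Your proof is correct, but it follows a genuinely different route from the paper's. For the ``soundness'' half, the paper argues entirely inside the presented group: it lists the $n$ powers of $xy$, shows that the families $(xy)^{p}x$ and $(xy)^{p}y$ coincide and consist of involutions, and uses the alternating-word observation to conclude there are no further elements, so the presented group has order $2n$ with an order-$n$ cyclic subgroup and $n$ involutions outside it --- exactly the structural characterization of $D_{n}$ given at the start of Section \ref{presClass}. You instead change generators to $(x,\rho)$ with $\rho = xy$, recognize the standard presentation (\ref{eq:DnPresA}), and pin down the order by combining the normal form $\{\rho^{i},\rho^{i}x\}$ with a von Dyck-style surjection $\Gamma \twoheadrightarrow D_{n}$. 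What your route buys is rigor at a point the paper leaves implicit: the paper never verifies that the $n$ involutions $(xy)^{p}x$ are distinct from the $n$ powers of $xy$, whereas in your argument this falls out of the order count $|\Gamma| = 2n$ forced by the surjection. What the paper's route buys is brevity and self-containment: it needs no auxiliary homomorphism, only the group-theoretic description of $D_{n}$ it has already fixed. For the ``completeness'' half, the paper relegates the work to the discussion preceding the theorem (the reduction to $x = r^{a}f$, $y = r^{b}f$), while you additionally carry out the computation $xy = r^{a-b}$ and the generation criterion $\gcd(a-b,n)=1$; this in essence reproves Lemma \ref{numCharOne} from the paper's appendix, which is a reasonable price for making the classification claim fully explicit within the proof itself.
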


\begin{proof}
  The elements $(xy),(xy)^{2},(xy)^{3},\cdots , (xy)^{n}=1$ produce the order $n$ cyclic subgroup of $D_{n}$.
  Then $(xy)x,(xy)^{2}x,(xy)^{3}x,\cdots , (xy)^{n}x$ are all distinct, as are $(xy)y,(xy)^{2}y,(xy)^{3}y,\cdots , (xy)^{n}y$.
  Now, we claim that there is an integer $p$ so that $y = (xy)^{p}x$. We have that $(xy)^{n} = (xy)^{n-1}xy = 1$. But then multiplying both of sides of the second equality by $y$ on the right gives $(xy)^{n-1}x = y$. Thus, the sets $\{(xy)x,(xy)^{2}x,(xy)^{3}x,\cdots , (xy)^{n}x\}$ and $\{(xy)y,(xy)^{2}y,(xy)^{3}y,\cdots , (xy)^{n}y\}$ are really the same. This yields another $n$ elements, each of which is its own inverse by the relations $x^2=y^2=1$. This is illustrated by the expansion
  \[ (xy)^p x (xy)^p x = (xy)(xy)\cdots(xy)x(xy)(xy)\cdots(xy)x , \]
  which, by the relations, reduces to $1$. Further, since $x^2=y^2=1$ and $xy\neq1,yx\neq1$, there are no other elements because words must alternate. Thus, this is a group with $2n$ elements, exactly half of which form a cyclic subgroup and the remaining are all involutions.
\end{proof}

As we will see, the result from Theorem \ref{DnPres1b} will be important for much of what follows. An interesting concrete case to consider is that of $D_4$, the set $S=\{f,r^{2}f\}$ does not generate the group. In fact, it is easy to see that the set $\{f,r^{a}f\}$ generates $D_{n}$ if and only if $a$ is relatively prime with $n$.

\subsection{Classification of symmetric presentations using three elements}
 \label{classSectionB}
  In this section, we categorize the symmetric presentations for $D_{n}$ that use three elements as the generating set. These are distinguished by the types of relations that are necessary to impose and are motivated by the examples that accompany the results.

\subsubsection{Two involutions and one cyclic element}
\label{classSectionB1}
\begin{examp}
  Let $n=6$, and consider the generating set $S=\{f,rf,r^{3}\}$. In this example, the pair $f,rf$ generates an order $n$ cyclic subgroup, and in fact all of $D_{6}$. The additional involution element $r^{3}$ is not necessary to obtain $D_{6}$ but, as we will see later, it may affect the values of the $\lambda_{i}$.
\end{examp}

\begin{theorem}
\label{DnPres2}
 For $n$ even,
 \[D_{n} = \left<  x,y,z\mid \ x^2=y^2=z^2=1,\ |xy|=n,\ z=(xy)^{\frac{n}{2}},\ xz=zx,\ yz=zy \right>\]
 classifies the symmetric presentations of $D_{n}$  using a generating set with three elements in the case where exactly one of them is an element of the order $n$ cyclic subgroup.
\end{theorem}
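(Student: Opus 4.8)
The plan is to treat this as a genuine classification, i.e. to prove it in two directions: first that the abstract group
$G=\langle x,y,z\mid x^2=y^2=z^2=1,\ |xy|=n,\ z=(xy)^{n/2},\ xz=zx,\ yz=zy\rangle$ is isomorphic to $D_n$, and second that every concrete symmetric generating set $S=\{x,y,z\}$ of $D_n$ in which exactly one generator lies in the order $n$ cyclic subgroup satisfies precisely these relations.

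For the first direction I would lean directly on Theorem \ref{DnPres1b}. The relations $x^2=y^2=1$ together with $|xy|=n$ already present a copy of $D_n$ on $\{x,y\}$, so the subgroup $\langle x,y\rangle$ exhausts the group once I show that $z$ contributes nothing new. The relation $z=(xy)^{n/2}$ merely names the element $(xy)^{n/2}\in\langle x,y\rangle$, which in the $D_n$ picture is the central involution $r^{n/2}$. I would then observe that the remaining relations are automatic consequences, since $\bigl((xy)^{n/2}\bigr)^2=(xy)^n=1$ gives $z^2=1$, and $r^{n/2}$ is central for $n$ even, giving $xz=zx$ and $yz=zy$. In the language of Tietze transformations this says $z$ is a redundant generator, so $G\cong\langle x,y\mid x^2=y^2=1,\ |xy|=n\rangle\cong D_n$.

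For the second direction I would argue along the concrete lines of the discussion preceding the theorem. Symmetry forces each generator to be its own inverse; the single generator lying in the cyclic subgroup then has order two there, and since (for $n$ even) the unique such element is $r^{n/2}$, it is forced that $z=r^{n/2}$, which yields $z^2=1$ and, by centrality of $r^{n/2}$, the relations $xz=zx$ and $yz=zy$. The remaining two generators lie outside $\langle r\rangle$, hence are reflections $x=r^af$ and $y=r^bf$, so $x^2=y^2=1$ and $xy=r^{a-b}$.

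It then remains to extract $|xy|=n$ and $z=(xy)^{n/2}$, and this is the step I expect to be the main obstacle. Since $\langle x,y,z\rangle$ has rotation subgroup $\langle r^{a-b},r^{n/2}\rangle$, the hypothesis that $S$ generates $D_n$ gives $\gcd(a-b,n/2)=1$; the heart of the matter is to upgrade this to $\gcd(a-b,n)=1$, equivalently to establish that $a-b$ is odd, which is exactly what forces $|xy|=n$ and in turn $(xy)^{n/2}=r^{(a-b)n/2}=r^{n/2}=z$. I would isolate this coprimality-and-parity bookkeeping as a short arithmetic lemma comparing $\gcd(a-b,n/2)$ with $\gcd(a-b,n)$, since it is the crux on which both remaining relations rest and the point at which the generating hypothesis must be used most carefully.
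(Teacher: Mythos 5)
Your first half is correct and is essentially the paper's own proof: the paper handles Theorem \ref{DnPres2} by remarking that the argument is ``almost identical'' to that of Theorem \ref{DnPres1b}, the point being exactly yours --- $z=(xy)^{n/2}$ is a redundant generator, and the accompanying relations are consequences of $x^2=y^2=1$ and $|xy|=n$ (for instance $x(xy)^{n/2}x^{-1}=\left((xy)^{-1}\right)^{n/2}=(xy)^{n/2}$, so $xz=zx$ is automatic). Up to that point you and the paper agree.

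The gap is in your second direction, and it is fatal to the plan as written: the ``short arithmetic lemma'' you defer --- upgrading $\gcd(a-b,n/2)=1$ to $\gcd(a-b,n)=1$ --- is \emph{false} whenever $n\equiv 2 \pmod 4$. Take $n=6$ and $S=\{x,y,z\}=\{f,\,r^2f,\,r^3\}$. This set is symmetric (all three elements are involutions), exactly one element lies in $\langle r\rangle$, and it generates $D_6$: the rotations it produces include $fr^2f\cdot 1=r^{-2}$ and $r^3$, and $\langle r^{-2},r^3\rangle=\langle r\rangle$ since $\gcd(2,3)=1$. Yet $|xy|=|r^{-2}|=3\neq 6$ and $(xy)^{3}=1\neq z$; moreover no relabelling of $x,y,z$ repairs this, because every pairwise product in $S$ has order $3$ or $2$, so the relation $|xy|=n$ can never be met. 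Structurally this is the decomposition $D_n\cong D_{n/2}\times\mathbb{Z}_2$, valid exactly when $n/2$ is odd: the two reflections generate only a dihedral subgroup of order $n$, and the central involution is genuinely needed to generate, rather than being redundant as the theorem's relations force. So no parity bookkeeping can close your gap; the upgrade does hold when $4\mid n$ (there $\gcd(a-b,n/2)=1$ forces $a-b$ odd, hence $\gcd(a-b,n)=1$), but for $n\equiv 2\pmod 4$ the converse you attempted is simply not true, and your own framework makes this visible. The paper never runs into this because its proof establishes only the presentation direction (your first half) and does not carry out the classification converse at all.
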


Note that there is no analogous presentation for $D_{n}$ when $n$ is odd since in this case the center of the group is trivial. The proof that this gives a presentation of $D_{n}$ is almost identical to that of Theorem \ref{DnPres1b}. In fact, the element corresponding to $r^{\frac{n}{2}}$ is not needed in order to generate the group but may affect the values of the $\lambda_{i}$ as we will see in section \ref{lambdaBoundsB1}.

\subsubsection{Two cyclic elements and one involution}
\label{classSectionB2}
\begin{examp}
  Let $n=6$, and consider the generating set $S=\{f,r,r^{5}\}$. In this example, the pair $f,r$ gives the standard presentation of $D_{6}$. The element $r^{5}$, being the inverse of $r$, is added simply to make the set symmetric.
\end{examp}

\begin{theorem}
\label{DnPres3}
  For a positive integer $n$,
  \[D_{n} = \left<x,y,z\mid \ x^2=1,\ |y|=|z|=n,\ yx=xz,\ yz=zy=1,\ |xy|=|xz|=2 \right>\]
  classifies the symmetric presentations of $D_{n}$  using a generating set with three elements in the case where exactly two of the elements are contained in the order $n$ cyclic subgroup.
\end{theorem}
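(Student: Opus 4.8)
The plan is to prove the statement in two directions, mirroring the structure of the proof of Theorem~\ref{DnPres1b}. First I would establish the classification, showing that any symmetric generating set $S=\{x,y,z\}$ with exactly two elements lying in the order $n$ cyclic subgroup must, in the notation of (\ref{eq:DnDescription}), consist of a single involution $x=r^{a}f$ together with a mutually inverse pair of rotations $y=r^{b}$, $z=r^{-b}$. The lone non-cyclic generator $x$ is not in $\langle r\rangle$, so it is necessarily a flip and hence an involution; since there is only one non-cyclic generator, symmetry forces $x^{-1}=x$, giving $x^{2}=1$. For the two cyclic generators, symmetry requires $\{y^{-1},z^{-1}\}\subseteq\{x,y,z\}$; as inverses of rotations are rotations they cannot equal $x$, and the possibility $y^{-1}=y$, $z^{-1}=z$ would force $y=z=r^{n/2}$, collapsing to a single cyclic element, so the only admissible configuration is $y^{-1}=z$. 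Finally, since $x$ is a flip and $y$ a rotation of order $|y|$, the subgroup $\langle x,y\rangle$ is dihedral of order $2|y|$, so $S$ generates $D_{n}$ precisely when $|y|=|z|=n$.

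Next I would verify that every such concrete realization satisfies the listed relations. Using $rf=fr^{-1}$ from (\ref{eq:noncommute}) one computes directly that $yx=r^{b}r^{a}f=r^{a+b}f=r^{a}fr^{-b}=xz$, that $xy=r^{a-b}f$ and $xz=r^{a+b}f$ are flips and hence satisfy $|xy|=|xz|=2$, and that $yz=zy=1$ together with $|y|=|z|=n$ hold since $\gcd(b,n)=1$. This shows $D_{n}$ is a quotient of the abstractly presented group, yielding the required surjection.

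It then remains to show the abstract group $\langle x,y,z\mid\cdots\rangle$ has order at most $2n$. Here I would use $yz=zy=1$ to eliminate the redundant generator, setting $z=y^{-1}$. The relation $yx=xz$ then becomes exactly the dihedral relation $yx=xy^{-1}$, while $(xy)^{2}=1$ and $(xz)^{2}=1$ are seen to be consequences of $x^{2}=1$, $yx=xy^{-1}$, and $y^{n}=1$: from $y^{k}x=xy^{-k}$ one gets $xyxy=x(xy^{-1})y=1$, and similarly $xy^{-1}xy^{-1}=x(xy)y^{-1}=1$. What survives is the standard presentation $\langle x,y\mid x^{2}=y^{n}=1,\ yx=xy^{-1}\rangle$ of $D_{n}$. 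A normal-form argument then finishes the count: using $yx=xy^{-1}$ to push every occurrence of $x$ to the right, $x^{2}=1$ to leave at most one $x$, and $y^{n}=1$ to reduce the rotation exponent modulo $n$, every word reduces to one of $y^{i}$ or $y^{i}x$ with $0\le i<n$, giving at most $2n$ elements. Combined with the surjection onto the order $2n$ group $D_{n}$, the presented group has exactly $2n$ elements and is isomorphic to $D_{n}$.

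I expect the classification direction to be the main obstacle: the delicate points are carefully excluding the degenerate configurations for the cyclic pair (in particular the even-$n$ case $y=z=r^{n/2}$) and confirming that ``exactly two generators in the cyclic subgroup'' together with symmetry genuinely \emph{forces} the mutually inverse rotation structure rather than merely permitting it. By contrast, once $z=y^{-1}$ is substituted, the reduction to the standard dihedral presentation and the $2n$-element count are routine and run parallel to the enumeration already carried out in Theorem~\ref{DnPres1b}.
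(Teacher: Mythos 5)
Your proposal is correct and follows essentially the same route as the paper: the paper's entire proof is the observation that $yz=zy=1$ together with $|y|=|z|=n$ forces $z=y^{n-1}$, so the presentation collapses to the usual presentation of $D_{n}$ with the inverse of $r$ adjoined to make the generating set symmetric. Your extra work --- classifying the concrete symmetric generating sets as $\{r^{a}f,\,r^{b},\,r^{-b}\}$ with $\gcd(b,n)=1$, verifying the relations hold there, and the normal-form count giving order $2n$ --- simply fills in the details that the paper dismisses as ``clear.''
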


Notice that the second and fourth relations imply that $z=y^{n-1}$. Thus, this is exactly the usual presentation for $D_{n}$ obtained by adding the inverse of $r$ thereby making $S$ symmetric. Then, it is clear that this gives a presentation for $D_{n}$.

\subsubsection{Three involution elements}
\label{classSectionB3}
Before describing in detail the remainder of the three element presentations for $D_{n}$ we present a simple result that will show that we do in fact end up with a complete classification. The basic idea is to understand what all of the possibilities are in the case that the three generators are all involutions, none of which belong to the order $n$ cyclic subgroup.

Let $a$ and $b$ be nonnegative integers less than $n$, then for a subset $S = \{f,r^{a}f,r^{b}f\}$ of $D_{n}$, define the cyclic subgroups $H_{1} = \langle r^{a}f f\rangle = \langle r^{a}\rangle$, $H_{2} = \langle r^{b}f f\rangle = \langle r^{b}\rangle$, and $H_{3} = \langle r^{b}fr^{a}f \rangle = \langle r^{b-a} \rangle$. Also, define the sets $H_{1}H_{2}H_{3} = \{h_{1}h_{2}h_{3}\mid \ h_{i}\in H_{i}, i=1,2,3\}$ and $H_{1}H_{2} = \{h_{1}h_{2}\mid \ h_{i}\in H_{i}, i=1,2\}$.

  We make the following observations:
  \begin{enumerate}
    \item For each $i=1,2,3$, $H_{i}$ is a normal subgroup of $D_{n}$ since it is generated by an element of the order $n$ cyclic subgroup of $D_n$. For example, take $H_{1}$, then
       \[ r^{k}r^{ja}r^{-k} = r^{ja} \in H_{1}, \]
       \[ r^{k}fr^{ja}r^{k}f = r^{-ja} \in H_{1}. \]
    \item $H_{3} \subset H_{1}H_{2}$. This follows since anything of the form $r^{j(b-a)}$, which is a typical element of $H_{3}$, is a special case of something of the form $r^{nb+ma}$, which is a typical element of $H_{1}H_{2}$. Thus, we have that $H_{1}H_{2}H_{3}=H_{1}H_{2}$.
  \end{enumerate}

  With this notation, we have the following lemma which provides a concise characterization of when three involutions will generate $D_{n}$.

  \begin{lemma}
  \label{subgroupLemma}
    For $a$ and $b$ be nonnegative integers less than $n$, let $S = \{f,r^{a}f,r^{b}f\} \subset D_{n} = \{1,r,\ldots,r^{n-1},f,rf,\ldots,r^{n-1}f\}$. Then, $S$ generates $D_{n}$ if and only if $H_{1}H_{2} = \{h_{1}h_{2}\mid \ h_{i}\in H_{i}\} = \langle r\rangle = \{1,r,\ldots,r^{n-1}\}$. By the previous observation, this is equivalent to the condition that $H_{1}H_{2}H_{3} = \langle r \rangle$.
  \end{lemma}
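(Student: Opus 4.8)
The plan is to split $\langle S\rangle$ into its rotation part and its reflection part and show that the rotation part is exactly $H_1H_2$; the stated equivalence then drops out of an index/counting argument. Set $R := \langle S\rangle \cap \langle r\rangle$, the subgroup of rotations inside the group generated by $S$. Since $f\in S$, the subgroup $\langle S\rangle$ contains a reflection, so $R$ has index two in $\langle S\rangle$: every reflection $w\in\langle S\rangle$ satisfies $fw\in R$, giving $w\in fR$, hence $\langle S\rangle = R\cup fR$ and $|\langle S\rangle| = 2|R|$. Consequently $S$ generates $D_n$ if and only if $|R| = n$, i.e.\ if and only if $R = \langle r\rangle$. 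Thus the whole lemma reduces to establishing the single identity $R = H_1H_2$, after which ``$S$ generates $D_n$'' and ``$H_1H_2 = \langle r\rangle$'' become the same statement, and the equivalence with $H_1H_2H_3 = \langle r\rangle$ follows from observation (2).

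First I would record that $H_1H_2 = \langle r^{a}\rangle\langle r^{b}\rangle = \langle r^{a}, r^{b}\rangle$, which is a genuine subgroup because $\langle r\rangle$ is abelian. The inclusion $H_1H_2 \subseteq R$ is then immediate: using $f^{2}=1$ (from (\ref{eq:flipSymmetry}) with $i=0$) together with (\ref{eq:noncommute}), one has $r^{a} = (r^{a}f)f$ and $r^{b} = (r^{b}f)f$, so both generators of $H_1H_2$ lie in $\langle S\rangle\cap\langle r\rangle = R$.

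The reverse inclusion $R\subseteq H_1H_2$ is the core of the argument. Any element of $R$ is a rotation written as a word in the three involutions $f, r^{a}f, r^{b}f$; since each letter is a reflection, the word must have even length $2m$. Pairing the letters consecutively, each of the nine possible products of two letters from $S$ evaluates, via $f^{2}=1$ and $fr^{k}f = r^{-k}$ (a consequence of (\ref{eq:noncommute})), to one of $1,\ r^{\pm a},\ r^{\pm b},\ r^{\pm(a-b)}$, every one of which lies in $\langle r^{a}, r^{b}\rangle = H_1H_2$. Hence the length-$2m$ word is a product of $m$ elements of $H_1H_2$ and so lies in $H_1H_2$. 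This yields $R\subseteq H_1H_2$ and completes the identity $R = H_1H_2$.

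I expect the only real obstacle to be the bookkeeping in this last inclusion, specifically confirming that the consecutive pairing of an even-length word is legitimate and that each pairwise product genuinely stays inside $\langle r^{a}, r^{b}\rangle$; the parity observation (reflections occur in pairs) is precisely what makes the reduction to pairwise products valid without any assumption about how the word was formed. Everything else is formal: the containment $H_1H_2\subseteq R$ is a one-line verification, and the passage from $R = H_1H_2$ to the biconditional is just the count $|\langle S\rangle| = 2|R|$.
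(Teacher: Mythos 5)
Your proof is correct and is essentially the paper's own argument in a cleaner package: both rest on the observation that a rotation written as a word in the three reflections has even length and so splits into consecutive pairs, each of which is one of $1, r^{\pm a}, r^{\pm b}, r^{\pm(a-b)} \in H_1H_2$, with the converse handled by counting (your index-two identity $|\langle S\rangle| = 2|R|$ is the paper's observation that the rotations together with $H_1H_2f$ exhaust $D_n$). The one refinement is that where the paper invokes the pairwise commutativity relation $abcd=cdab$ to sort the pairs into the form $(r^a)^j(r^b)^k(r^{b-a})^l \in H_1H_2H_3$, you avoid sorting altogether by noting that $H_1H_2 = \langle r^a, r^b\rangle$ is already a subgroup of the abelian group $\langle r\rangle$, so closure under multiplication suffices.
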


    \begin{proof}First suppose that $S = \{f,r^{a}f,r^{b}f\}$ generates $D_{n}$. Then there are integers $j,k,l$ such that\footnote{The reason that this is true is because $r$ must be made up of an alternating product of the elements from $S$ of even length. Then one can apply the pairwise commutativity property $abcd=cdab$ to rewrite this in the form $(r^{a}f f)^{j}(r^{b} f f)^{k}(r^{b}f)^{l}$.}
  \[ r = (r^{a}f f)^{j}(r^{b} f f)^{k}(r^{b}f)^{l} \in H_{1}H_{2}H_{3}=H_{1}H_{2}. \]
  This implies that $\langle r \rangle \subset H_{1}H_{2}$. On the other hand, by definition $H_{1}H_{2} \subset \langle r \rangle$. Thus, we have that $H_{1}H_{2} = \langle r \rangle$ and this proves the first direction.

  Conversely, suppose that $H_{1}H_{2} = \langle r \rangle$, then we have the order $n$ cyclic subgroup $\{1,r,\ldots, r^{n-1}\}$ of $D_{n}$. Note that the set $H_{1}H_{2}f = \{h_{1}h_{2}f\mid \ h_{i}\in H_{i}\}$ then gives exactly $n$ distinct involution elements.
  \end{proof}

   In order to obtain a slight generalization, suppose that we have a set $S = \{r^{a}f,r^{b}f,r^{c}f\}$, then take $S' = \{r^{a}f,r^{b-a}(r^{a}f),r^{c-a}(r^{a}f)\} = \{f',r^{a'}f',r^{b'}f'\}$, where $f'=r^{a}f$, $a'=b-a$, and $b'=c-a$. Then one can apply the previous result to $S'$, since it is equivalent to $S$.

   A consequence of Lemma \ref{subgroupLemma} is that, given a subset $S = \{f,r^{a}f,r^{b}f\}$ of $D_{n}$, there are five possibilities with respect to the generating of $D_{n}$. Using the notation previously introduced, these are:
   \begin{enumerate}
     \item $H_{1}=H_{2}=H_{3}=\langle r \rangle$
     \item Exactly two of $H_{1},H_{2},H_{3}$ are $\langle r \rangle $
     \item Exactly one of $H_{1},H_{2},H_{3}$ is $\langle r \rangle$
     \item None of $H_{1},H_{2},H_{3}$ is $\langle r \rangle$ but $H_{1}H_{2} = \langle r \rangle$
     \item $H_{1}H_{2}$ is a strict subgroup of $\langle r \rangle$, in other words $S = \{f,r^{a}f,r^{b}f\}$ does not generate $D_n$.
   \end{enumerate}
   This motivates the remainder of our classification of three element presentations of $D_{n}$. It is appropriate to note here that there are precise number-theoretic conditions for the exponents $a,b$ in $S = \{f,r^{a}f,r^{b}f\}$ that can be used to distinguish the five cases above in concrete situations. The details of this are given in Section \ref{NumThrConditions} so as not to detract from the main line of reasoning.

\begin{theorem}
\label{threeFlipsMain}
The following complete the classification of the presentations of $D_{n}$ with generating set of cardinality three.
\begin{enumerate}[(A)]
  \item \label{DnPres4}
  For a positive integer $n$, $D_{n}$  is $\left<x,y,z\right>$ with relations $x^2=y^2=z^2=1,\ |xy|= |xz| = |yz|= n$; $\forall a,b,c,d\in\{x,y,z\}$, $abcd=cdab$;  and $\langle xy \rangle = \langle xz \rangle  = \langle yz \rangle $.
  \item \label{DnPres5}
  For a positive integer $n$, $D_{n}$  is $\left<x,y,z\right>$ with relations $x^2=y^2=z^2=1,\ |xy|= |xz|=n$;  $\forall a,b,c,d\in\{x,y,z\}$, $abcd=cdab$;  and $\langle yz \rangle \subsetneq \langle xy \rangle = \langle xz \rangle$.
  \item  \label{DnPres6}
 For a positive integer $n$, $D_{n}$  is $\left<x,y,z\right>$ with relations $x^2=y^2=z^2=1,\ |xy|=n$;  $\forall a,b,c,d\in\{x,y,z\}$, $abcd=cdab$; and $\langle xz \rangle \subsetneq \langle xy \rangle$, $\langle yz \rangle \subsetneq \langle xy \rangle$.
 \item  \label{DnPres7}
 For a positive integer $n$, $D_{n}$  is $\left<x,y,z\right>$ with relations $x^2=y^2=z^2=1$; $\forall a,b,c,d\in\{x,y,z\}$, $abcd=cdab$; and if $H_{1} = \langle xy \rangle$, $H_{2} = \langle xz \rangle$, $H_{3} = \langle yz \rangle$ we have $H_{1}H_{2}$ is an order $n$ cyclic subgroup with $H_{1} \subsetneq H_{1}H_{2}$, $H_{2} \subsetneq H_{1}H_{2}$, and $H_{3} \subsetneq H_{1}H_{2}$.
\end{enumerate}
\end{theorem}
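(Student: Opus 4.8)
The plan is to prove each of the four presentations (A)--(D) is isomorphic to $D_{n}$ by a single uniform argument, treating the products of pairs of generators as ``rotations'' and recovering the semidirect structure $C_{n}\rtimes C_{2}$. For a presented group $G=\langle x,y,z\rangle$ with the stated relations, I would set $u=xy$ and $v=xz$ and let $R=\langle u,v\rangle$. First I would show $R$ is abelian: the relation $abcd=cdab$ applied to $xyxz=xzxy$ gives $uv=vu$, and the analogous substitutions show $yz=u^{-1}v\in R$, so $R$ is generated by two commuting elements and hence is abelian. Next I would observe that each generator inverts $R$: since $x^{2}=1$ one computes $xux^{-1}=yx=u^{-1}$ and $xvx^{-1}=zx=v^{-1}$, so conjugation by $x$ inverts the generators of the abelian group $R$ and therefore inverts all of $R$; because $y=xu$ and $z=xv$ lie in $xR$, the same holds for $y$ and $z$. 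In particular $R$ is normalized by all generators, so $R\trianglelefteq G$.

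The cases then differ only in how the order of $R$ is pinned down. In cases (A), (B), and (C) the relations force $\langle xy\rangle$ to have order $n$ and force $xz\in\langle xy\rangle$ (directly in (A) and (B) via $\langle xz\rangle=\langle xy\rangle$, and in (C) via $\langle xz\rangle\subsetneq\langle xy\rangle$); hence $z=x\,(xy)^{k}$ for some $k$, the generator $z$ is redundant, and $G=\langle x,y\rangle$ with $x^{2}=y^{2}=1$ and $|xy|=n$. By Theorem \ref{DnPres1b} this is exactly $D_{n}$, and one checks that the remaining relations involving $z$ (namely $z^{2}=1$ and the subgroup conditions) are automatically satisfied after the substitution and impose no new collapse. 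In case (D) neither $\langle xy\rangle$ nor $\langle xz\rangle$ is all of the rotation subgroup, so $z$ is not redundant; instead the relation that $H_{1}H_{2}=\langle u,v\rangle$ is cyclic of order $n$ tells us directly that $R$ is cyclic of order $n$.

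With $R\trianglelefteq G$ of order $n$ established, I would finish by the counting argument $|G|=|R|\,[G:R]$. The quotient $G/R$ is generated by the images of $x,y,z$, and since $y=xu$ and $z=xv$ all three map to the single involution $\bar x$, so $[G:R]\le 2$. To see the index is exactly $2$ (and thus $G\cong R\rtimes\langle x\rangle\cong D_{n}$), I would invoke the concrete realization guaranteed by the case analysis: for a generating set $\{f,r^{a}f,r^{b}f\}$ falling in the given case, Lemma \ref{subgroupLemma} and the five-way classification preceding the theorem show the stated relations hold in $D_{n}$, yielding a surjection $G\twoheadrightarrow D_{n}$ and hence $|G|\ge 2n$. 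Combined with $|G|\le 2n$ this forces $|G|=2n$ and makes the surjection an isomorphism. Exhaustiveness of the four cases is already supplied by Lemma \ref{subgroupLemma} together with the enumeration of the five possibilities, with the fifth excluded precisely because it fails to generate $D_{n}$.

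The main obstacle I anticipate is the order bound in case (D): one must verify that the relation ``$H_{1}H_{2}$ is an order $n$ cyclic subgroup'' genuinely forces $R=\langle u,v\rangle$ to be cyclic of order exactly $n$ rather than a larger abelian group, since it is this fact that caps $|G|$ at $2n$. Checking that the several subgroup-containment relations are mutually consistent and collectively equivalent to the concrete number-theoretic conditions on $a,b$ (deferred to Section \ref{NumThrConditions}) is the delicate bookkeeping step; everything else reduces cleanly either to Theorem \ref{DnPres1b} or to the standard identification of $C_{n}\rtimes C_{2}$ with $D_{n}$.
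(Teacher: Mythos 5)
Your proposal is correct, and it takes a genuinely different (and more rigorous) route than the paper's own proof. The paper's argument is a terse element count: it asserts that each presentation clearly yields at least $n$ elements of a cyclic subgroup and at least $n$ distinct involutions, that every element of the presented group is either in the order-$n$ cyclic subgroup or is an involution, and that the final subgroup condition in each of (\ref{DnPres4})--(\ref{DnPres7}) is exactly what guarantees there are precisely $n$ of each, so the group has order $2n$ with the dihedral structure described at the start of Section~\ref{presClass}. You instead build the rotation subgroup $R=\langle xy,\,xz\rangle$ explicitly, use the commutation relation $abcd=cdab$ to prove it abelian, use the involution relations to show every generator inverts $R$ (hence $R\trianglelefteq G$ and $[G:R]\le 2$), reduce cases (\ref{DnPres4})--(\ref{DnPres6}) to Theorem~\ref{DnPres1b} by observing that $z$ is redundant there, and then pin down the order from below by a Von Dyck-style surjection $G\twoheadrightarrow D_{n}$ onto a concrete realization $\{f,r^{a}f,r^{b}f\}$, sandwiching $|G|=2n$. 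Your route buys rigor precisely where the paper is weakest: since imposing relations can only shrink a presented group, conditions like $|xy|=n$ only yield $(xy)^{n}=1$ a priori, and it is your surjection argument that converts the upper bound $|G|\le 2n$ into equality and rules out collapse of the involution coset into the rotation subgroup --- a lower-bound issue the paper's proof leaves entirely implicit. Your proof also makes visible which relation does which job (in particular, the paper's proof never explicitly invokes $abcd=cdab$ at all, even though it is what makes $R$ normal). What the paper's approach buys is brevity and self-containment within Section~\ref{presClass}, whereas your argument imports universal-property machinery of the kind the paper only develops afterwards, in Lemma~\ref{isoLemma}, for the $\lambda_{i}$ computations; your reliance on a concrete realization of each case also tacitly restricts $n$ to those values for which the case actually occurs (e.g., case (\ref{DnPres7}) requires composite $n$ of the sort in Example~\ref{threeFlipsExamples}), but this restriction is inherent in the classification framing and is shared by the paper.
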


\begin{proof}
  In each of the cases (\ref{DnPres4})-(\ref{DnPres7}), it is clear that we obtain, at least $n$ elements of a cyclic subgroup, and at least $n$ distinct involution elements. Furthermore, each of the four presentations gives elements that are either contained in an order $n$ cyclic subgroup, or are involutions. It remains to verify that there are exactly $n$ elements that form a cyclic subgroup, and exactly $n$ distinct involution elements. This is precisely what is given by the final condition in the statement of each of (\ref{DnPres4})-(\ref{DnPres7}).
\end{proof}

Next, we provide examples to illustrate each of the cases described in Theorem \ref{threeFlipsMain} (\ref{DnPres4}) - (\ref{DnPres7}).

\begin{examp}
\label{threeFlipsExamples}
  The following examples illustrate the presentations given in Theorem \ref{threeFlipsMain} (\ref{DnPres4})--(\ref{DnPres7}).
  \begin{enumerate}[(A)]
    \item Let $n=5$ and take $S = \{f,rf,r^{2}f\}$, here all three pairs $\{f,rf\},\{f,r^{2}f\},\{rf,r^{2}f\}$ generate $D_{5}$.
    \item Let $n=6$ and take $S = \{f,rf,r^{2}f\}$, here only the two pairs $\{f,rf\},\{rf,r^{2}f\}$ generate $D_{6}$.
    \item  Let $n=6$ and take $S = \{f,rf,f^{4}f\}$, here only the pair $\{r,rf\}$ generates $D_{6}$.
    \item  Let $n=30$ and take $S=\{f,r^{3}f,r^{5}f\}$, here no pair generates $D_{30}$, but the triple $\{f,r^{3}f,r^{5}f\}$ does.
  \end{enumerate}
\end{examp}

Our problem now is to compute bounds on the values of $\lambda_{1}(D_{n},S)$ and $\lambda_{2}(D_{n},S)$ whenever $S$ is one of the generating sets for any of the presentations of $D_{n}$ that have been described in this section. First, we establish a technical lemma, Lemma \ref{lengthLemma}, in the next section that will allow us to considerably reduce the workload.

\section{Automorphisms of $D_{n}$ that preserve the $\lambda_{i}$ values}
\label{vonDyckIso}

In this section we construct certain automorphisms on the dihedral groups, represented as finitely presented groups, that have the feature of preserving the values of $\lambda_{1}(G,S)$ and $\lambda_{2}(G,S)$. These will serve as an important tool in our computation of bounds for $\lambda_{1}(G,S)$ and $\lambda_{2}(G,S)$ in section \ref{lambdaBounds}. For a fuller discussion regarding $D_{n}$ automorphisms, see \cite{cunningham2014}. To motivate the results of this section, consider the following example for $n=3$:
\begin{examp}
  Let $S_{1}=\{f,rf\}$ and $S_{2}=\{f,r^2f\}$ be two different generating sets for $D_{3}=\{1,r,r^2,f,rf,r^2f\}$. We define a map $S_{1}\rightarrow S_{2}$ by $f\mapsto f$, $rf\mapsto r^{2}f$. This produces an automorphism on $D_{3}$ by extending the mapping to the set of words from $S_{i}$, which sends generators to generators.  Observe:
  \begin{align}
    1 \ \ &\mapsto \ \ 1, \label{eq:relPres}\\
    r=(rf)(f) \ \ &\mapsto \ \ (r^{2}f)(f)=r^{2}, \\
    r^2=(f)(rf) \ \ &\mapsto \ \ (f)(r^{2}f)=r, \\
    f \ \ &\mapsto \ \ f, \\
    rf \ \ &\mapsto \ \ r^{2}f, \\
    r^{2}f=(f)(rf)(f) \ \ &\mapsto \ \ (f)(r^2f)(f)=rf.
  \end{align}
\end{examp}

Notice that in this example, the lengths of words are preserved under the automorphism on $D_{3}$, which is essentially defined by sending $f\mapsto f$, $rf\mapsto r^{2}f$, and then extending to words\footnote{The mapping $f\mapsto r^{2}f$ and $rf\mapsto f$ would also work. This point is addressed further at the end of this section.}. Another important point, discussed in greater detail below, is that the mapping just described preserves all of the important relations between elements in $S_{1}$.  In this section, we prove that this phenomenon generalizes in a specific manner.

We begin with a lemma that we refer to as the Van Dyck isomorphism property. In the following we denote by $\iota$ the usual inclusion map on a set.
\begin{lemma}
 \label{isoLemma}
  For two symmetric generating sets $S_{1} = \{x,y\}$ and $S_{2} = \{x',y'\}$ of $D_{n}$ that share all of the same relations, the mapping $\alpha:S_{1}\rightarrow S_{2}$ defined by $\alpha(x) = x'$, and $\alpha(y) = y'$ extends to a unique automorphism $\Phi:D_{n} \rightarrow D_{n}$ that maps generators to generators.
\end{lemma}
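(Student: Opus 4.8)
The plan is to invoke the von Dyck (or substitution) theorem for presented groups, which guarantees that a map sending generators to elements satisfying the defining relations extends to a homomorphism. First I would use Theorem \ref{DnPres1b}: since $S_1 = \{x,y\}$ is a symmetric generating set of cardinality two for $D_n$, the elements satisfy $x^2 = y^2 = 1$ and $|xy| = n$, and likewise $x'^2 = y'^2 = 1$, $|x'y'| = n$ for $S_2 = \{x',y'\}$. The hypothesis that the two generating sets ``share all of the same relations'' means precisely that $x',y'$ satisfy the same defining relations that $x,y$ do. By von Dyck's theorem applied to the presentation $\langle x,y \mid x^2 = y^2 = 1,\ |xy| = n \rangle$, the assignment $x \mapsto x'$, $y \mapsto y'$ therefore extends to a group homomorphism $\Phi : D_n \to D_n$.

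Next I would verify that $\Phi$ is an automorphism. Since $\Phi(x) = x'$ and $\Phi(y) = y'$, and $\{x',y'\} = S_2$ generates $D_n$ by hypothesis, the image of $\Phi$ contains a generating set, so $\Phi$ is surjective. Because $D_n$ is finite, a surjective endomorphism is automatically injective, hence $\Phi$ is bijective and thus an automorphism. By construction $\Phi$ sends the generators $x,y$ of $S_1$ to the generators $x',y'$ of $S_2$, so it maps generators to generators.

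For uniqueness, I would observe that any automorphism (indeed any homomorphism) out of $D_n$ is completely determined by its values on a generating set. Since $S_1 = \{x,y\}$ generates $D_n$ and we have prescribed $\Phi(x) = x'$, $\Phi(y) = y'$, any two extensions of $\alpha$ must agree on all words in $x,y$, hence on all of $D_n$; thus the extension is unique.

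The main obstacle is making rigorous the phrase ``share all of the same relations.'' The content to be pinned down is that every relation holding among $x,y$ also holds among $x',y'$ (and conversely), so that the von Dyck map is well defined in both directions. The cleanest way to handle this is to appeal to Theorem \ref{DnPres1b}, which shows that for a cardinality-two symmetric generating set the full relation structure is captured by $x^2 = y^2 = 1$ together with $|xy| = n$; once both pairs satisfy exactly this same presentation, there is a canonical isomorphism of the abstractly presented group with $D_n$ under each labeling, and composing one with the inverse of the other yields $\Phi$. I would therefore emphasize that the reduction to the finitely many relations of Theorem \ref{DnPres1b} is what makes the von Dyck argument applicable, and that finiteness of $D_n$ is what upgrades the resulting surjective homomorphism to an automorphism.
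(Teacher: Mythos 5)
Your proposal is correct, but it takes a genuinely different route from the paper's own proof. The paper does not cite von Dyck's theorem as a black box; it essentially re-derives it in situ: it forms the free groups $F(S_1)$ and $F(S_2)$, uses the universal property of free groups to produce $\phi:F(S_1)\rightarrow F(S_2)$, observes that the kernel of $\pi\circ\phi$ is exactly the relation subgroup $R_1$ (this is precisely where the hypothesis that $S_1$ and $S_2$ ``share all of the same relations'' enters), and applies the first isomorphism theorem to obtain $\Phi:F(S_1)/R_1\rightarrow F(S_2)/R_2$. Crucially, the paper obtains bijectivity not from finiteness but by running the identical construction in the reverse direction (via $\beta:S_2\rightarrow S_1$) to produce an explicit two-sided inverse $\Psi$. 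Your argument replaces both mechanisms: you invoke von Dyck directly, and you upgrade surjectivity (image contains the generating set $S_2$) to injectivity using the finiteness of $D_n$. You also ground the phrase ``share all of the same relations'' in Theorem \ref{DnPres1b}, reducing the hypothesis to the finite defining set $x^2=y^2=1$, $|xy|=n$; this is a legitimate and clarifying move (only the relator $(x'y')^n=1$ is needed for the forward map, and exactness of the order follows once the map is known to be bijective), whereas the paper works abstractly with the full relation subgroups $R_1,R_2$. What each approach buys: yours is shorter and correctly isolates why the hypothesis is automatically satisfied for cardinality-two symmetric generating sets; the paper's is self-contained, needs neither finiteness nor the classification theorem, and therefore generalizes verbatim to larger generating sets and other finitely presented groups --- which is exactly what the paper's remark after Lemma \ref{lengthLemma} exploits, and where ``sharing all relations'' becomes a genuine restriction (see the paper's $n=7$ versus $n=9$ example for three-element sets).
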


 \begin{proof}Let $F(S)$ denote the free group on the set $S$ and let $R$ denote the normal subgroup of $F(S)$ determined by any relations set on the elements of $S$. Then, the universal property for free groups leads to a unique homomorphism $\phi:F(S_{1}) \rightarrow F(S_{2})$ , which makes the following diagram commute.

\begin{displaymath}
    \xymatrix{ S_1 \ar@{^{(}->}[d]_{\iota} \ar[r]^{\alpha} & S_2 \ar@{^{(}->}[r]^{\iota} &F(S_2)& \\
               F(S_1) \ar@{.>}[urr]_\phi  & }
\end{displaymath}
 This homomorphism is defined by the formula
  \begin{align}
    &\phi\left(\left(x\right)^{\epsilon_{1}}\left(y\right)^{\delta_{1}}\cdots \left(x\right)^{\epsilon_{k}}\left(y\right)^{\delta_{k}} \right)  \\
    &= \left(x'\right)^{\epsilon_{1}}\left(y'\right)^{\delta_{1}}\cdots \left(x'\right)^{\epsilon_{k}}\left(y'\right)^{\delta_{k}},
  \end{align}
 where $\epsilon_{i},\delta_{i}\in \mathbb{Z}$. Next, consider the map $\pi \circ \phi :F(S_{1})\rightarrow F(S_{2})/R_{2}$, where $\pi$ is the canonical projection. The kernel of this map is $R_{1}$ since $S_{1}$ and $S_{2}$ have the same relations. Then the first isomorphism theorem applied to  $\pi \circ \phi$ gives a unique isomorphism $\Phi:F(S_{1})/R_{1}\rightarrow F(S_{2})/R_{2}$ such that the following diagram commutes.

\begin{displaymath}
    \xymatrix{ F(S_1) \ar[d]^\pi \ar[r]^\phi & F(S_2) \ar[r]^\pi &F(S_2)/R_2& \\
               F(S_1)/R_1 \ar@{.>}[urr]_\Phi  & }
\end{displaymath}
 By defining a map $\beta:S_{2}\rightarrow S_{1}$ by: $\beta(x') = x$, and $\beta(y') = y$ and interchanging $S_{1}$ and $S_{2}$, we obtain maps $\psi$ and $\Psi$ that are the inverses of $\phi$ and $\Phi$ respectively. Altogether we obtain the following commutative diagram
\begin{displaymath}
    \xymatrix{ S_1 \ar@{^{(}->}[d]_{\iota} \ar@<1.ex>[r]^{\alpha}
        & S_2  \ar@<1.ex>[l]^{\beta} \ar@{^{(}->}[d]^{\iota}  \\
               F(S_1)\ar[d]^\pi \ar@<1.ex>[r]^\phi
       & F(S_2)\ar[d]^\pi  \ar@<1.ex>[l]^\psi \\
               F(S_1)/R_1 \ar@<1.ex>[r]^\Phi
       & F(S_2)/R_2  \ar@<1.ex>[l]^\Psi  }
\end{displaymath}
from which the conclusion of the lemma follows.
\end{proof}

Using Lemma \ref{isoLemma}, we now establish when the homomorphism is also length-preserving.  This will serve as an important tool in establishing bounds on the quantities $\lambda_1(G,S)$ and $\lambda_2(G,S)$.

\begin{lemma}
 \label{lengthLemma}
If $S_{1} = \{x,y\}$ and $S_{2} = \{x',y'\}$ are symmetric generating sets of $D_{n}$ that share all of the same relations, then
  \[\max_{g\in D_{n}}\left\{l_{S_{1}}(g)\right\} =  \max_{g\in D_{n}}\left\{l_{S_{2}}(g)\right\}.\]
\end{lemma}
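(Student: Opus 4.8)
The plan is to leverage Lemma~\ref{isoLemma} directly: since $S_1$ and $S_2$ share all the same relations, that lemma provides a unique automorphism $\Phi:D_n\to D_n$ with $\Phi(x)=x'$ and $\Phi(y)=y'$, carrying words in $S_1$ to words in $S_2$ by formally replacing each $x$ with $x'$ and each $y$ with $y'$ (and respecting inverses, which by symmetry are again in the respective generating sets). The key observation is that this replacement preserves word length as a formal count of letters, so the heart of the argument is to show that $\Phi$ carries \emph{minimal-length} expressions to minimal-length expressions, which then forces $l_{S_1}(g)=l_{S_2}(\Phi(g))$ for every $g\in D_n$.

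First I would fix an arbitrary $g\in D_n$ and take a geodesic (minimal-length) word $w$ in the letters of $S_1$ representing $g$, so that the number of letters in $w$ equals $l_{S_1}(g)$. Applying $\Phi$ letter by letter yields a word $\Phi(w)$ in the letters of $S_2$ representing $\Phi(g)$, and $\Phi(w)$ has exactly the same number of letters as $w$. Hence $l_{S_2}(\Phi(g))\le l_{S_1}(g)$. The crucial point here is that $\Phi$ is a homomorphism of \emph{groups} that maps the generating set $S_1$ bijectively onto $S_2$: this is precisely the content of Lemma~\ref{isoLemma}, so no separate verification of the homomorphism property is needed.

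Next I would run the symmetric argument using the inverse automorphism $\Phi^{-1}=\Psi$, which Lemma~\ref{isoLemma} also supplies (it maps $S_2$ back to $S_1$). Starting from a geodesic word for $\Phi(g)$ in $S_2$ and applying $\Psi$ gives a word for $g$ in $S_1$ of the same length, so $l_{S_1}(g)\le l_{S_2}(\Phi(g))$. Combining the two inequalities yields $l_{S_1}(g)=l_{S_2}(\Phi(g))$ for every $g$. Since $\Phi$ is a bijection of $D_n$, as $g$ ranges over all of $D_n$ so does $\Phi(g)$; therefore
\[
\max_{g\in D_n}\{l_{S_1}(g)\}=\max_{g\in D_n}\{l_{S_2}(\Phi(g))\}=\max_{h\in D_n}\{l_{S_2}(h)\},
\]
which is the desired equality.

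I do not expect a serious obstacle, since the main structural work has already been done in Lemma~\ref{isoLemma}; the only point requiring slight care is ensuring that applying $\Phi$ respects the symmetric structure, i.e.\ that inverses of generators map to generators (so that a word of length $k$ in $S_1$ genuinely produces a word of length $k$ in $S_2$ rather than an expression needing further reduction). This is immediate because $S_1$ and $S_2$ are symmetric and $\Phi$ is a homomorphism, so $\Phi(x^{-1})=\Phi(x)^{-1}=(x')^{-1}\in S_2$. If anything is delicate, it is making explicit that length-preservation under a letter-by-letter homomorphism is a two-sided inequality requiring the inverse map, rather than a one-sided bound; the availability of $\Psi$ from Lemma~\ref{isoLemma} is exactly what closes that gap.
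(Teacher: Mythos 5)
Your proposal is correct and follows essentially the same approach as the paper: both rest entirely on Lemma~\ref{isoLemma}, using the fact that the automorphism $\Phi$ (and its inverse $\Psi$) maps generators to generators and therefore cannot increase word length. The only difference is cosmetic---you establish the pointwise identity $l_{S_1}(g)=l_{S_2}(\Phi(g))$ directly and then take maxima, whereas the paper argues by contradiction at the level of the maxima $M_1$ and $M_2$; your version is in fact slightly cleaner and proves a marginally stronger statement.
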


 \begin{proof}Let $\alpha: S_{1}\rightarrow S_{2}$ and $\Psi:D_{n} \rightarrow D_{n}$ be as in the previous lemma. Let $h\in D_{n}$ such that $l_{S_{1}}(h)=M_{1}:=\max_{g\in D_{n}}\left\{l_{S_{1}}(g)\right\}$, and let $\bar{h}\in D_{n}$ such that $l_{S_{2}}(\bar{h})=M_{2}:=\max_{g\in D_{n}}\left\{l_{S_{2}}(g)\right\}$.  Now suppose that $M_{2} > M_{1}$. We claim that this implies that there is no element $g\in D_{n}$ such that $\Psi(g)=\bar{h}$ thereby implying that $\Psi$ is not an automorphism in contradiction to the previous lemma. If there were such a $g$, then we can write it as a product of generators with no more that $M_{1}$ terms. But then since $\Psi$ is a homomorphism that maps generators to generators this would be an element with length in $S_{2}$ less than or equal to $M_{1}$ contradicting that $M_{2} > M_{1}$. By switching the roles of $S_{1}$ and $M_{1}$ with that of  $S_{2}$ and $M_{2}$ we obtain the result. \end{proof}

{\bf Remark:} Lemma \ref{isoLemma} is related to a result in the theory of group presentations sometimes known as Van Dyck's theorem, see \cite{fenrick1998, johnson1997,Smith2000}. Furthermore, there is nothing particularly special about the role of $D_{n}$, or the sizes of $S_{1}$ and $S_{2}$, in Lemma \ref{isoLemma} since the universal property for free groups and the first isomorphism theorem apply in more general settings. Thus, Lemma \ref{isoLemma} generalizes in an obvious way to give rise to a more general version of Lemma \ref{lengthLemma}. However, as we exhibit with an example, one must take care to be sure that a mapping from one specific generating set to another does in fact preserve all relations.

\begin{examp}First, consider the situation with $n=7$, $S_{1} = \{x_{1}=f,y_{1}=rf,z_{1}=r^2f\}$ and $S_{2} = \{x_{2}=f,y_{2}=r^3f,z_{2}=r^4f\}$. The mapping $f \mapsto f$, $rf \mapsto r^3f$, $r^2f \mapsto r^{4}f$ does not preserve the relation $z_{1}=(y_{1}x_{1})^{2}x_{1}=(y_{1}x_{1})y_{1}$ since in $S_{1}$ we have $(rf f)^{2}f = r^2f$ but in $S_{2}$ we have $z_{2} = r^4f = (y_{2}x_{2})^{4}x_{2} = (r^3f f)^{4}f$. On the other hand, it can be shown that the mapping $S_{1}=\{x_{1}=f,y_{1}=rf,z_{1}=r^2f\}\rightarrow S_{2}=\{x_{2}=r^3 f,y_{2}=f,z_{2}=r^4f\}$ given by $f \mapsto r^3 f$, $rf\mapsto f$, $r^2 f \mapsto r^4 f$ does preserve all relations, and hence preserves the $\lambda_{i}$ values. In contrast, for $n=9$, $S_{1} = \{x_{1}=f,y_{1}=rf,z_{1}=r^2 f\}$ and $S_{2} = \{x_{2}=f,y_{2}=rf,z_{2}=r^3 f\}$, there is no mapping $S_{1}\rightarrow S_{2}$ that will preserve all relations. This is due to the fact that in $S_{1}$, all of the products $(x_{1}y_{1}),(x_{1}z_{1})$, and $(y_{1}z_{1})$ have order $n=9$, while in $S_{2}$ the product $(x_{2}z_{2})$ has order $3<n=9$.
\end{examp}

These examples also serve to illustrate that the order in which generators are taken does matter. Furthermore, these examples illustrate the necessity of several of the relations we have imposed. However, even with all of this in mind, it will come to light in the next section that not all relations play a role in the actual computation of the $\lambda_{i}$ values.

\section{Bounds for $\lambda_{i}$ values}
\label{lambdaBounds}

In this section we discuss computing bounds for $\lambda_{1}$ and $\lambda_{2}$ corresponding to each of the presentations for $D_{n}$ given in Section \ref{presClass}. We apply the results from Section \ref{vonDyckIso} in order to work with concrete generating sets for $D_{n}$. We begin by computing bounds for the $\lambda_{i}$ in the case where the symmetric generating set has cardinality equal to two.

\subsection{Cardinality Two Case}
\label{lambdaBoundsA}

Fix an involution $f$ of $D_{n}$ and set $S_f = \{f,rf\}$, where $r$ generates the order $n$ cyclic subgroup of $D_{n}$. Clearly $S_f$ generates $D_{n}$ and $f^2=(rf)^2 = 1$, that is, $f$ and $rf$ are both involution elements of $D_{n}$. Thus the set $S_f$ is a symmetric generating set for $D_{n}$. We call this $S_f$ the symmetric order two simple generating set for $D_{n}$. Furthermore, it is the case that there is no smaller symmetric generating set for $D_{n}$. The following results hold:
\begin{theorem}
\label{twoFlipsSpecial}
 Let $G=D_{n}$ and $S_f = \{f,rf\}$, then
   \begin{description}
     \item[(a)] $\lambda_{1}(D_{n},S_f) \leq n$,
     \item[(b)] $\lambda_{2}(D_{n},S_f)=2$.
   \end{description}
\end{theorem}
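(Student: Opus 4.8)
The plan is to exploit the fact that, writing $x=f$ and $y=rf$, the set $S_f$ consists of two involutions whose product is a rotation: a short computation with the relations (\ref{eq:noncommute})--(\ref{eq:flipSymmetry}) gives $yx=r$ and $xy=r^{-1}$, so $\langle xy\rangle=\langle r\rangle$ is the order-$n$ cyclic subgroup, in agreement with Theorem \ref{DnPres1b}. Since $x^2=y^2=1$, every word in $S_f$ alternates between $x$ and $y$, so even-length words represent rotations $r^k$ and odd-length words represent reflections $r^k f$. This rotation/reflection dichotomy, together with the behaviour of conjugation, is what drives both parts.

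For part \textbf{(a)} I would first establish the uniform bound $l_{S_f}(g)\le n$ for every $g\in D_n$, from which $\lambda_1(D_n,S_f)\le n$ is immediate because each conjugate $gsg^{-1}$ is itself an element of $D_n$. The cleanest route is to observe that the Cayley graph of $D_n$ with respect to $S_f$ is connected and $2$-regular: each generator is its own inverse, so the edges are undirected and every vertex $g$ has exactly the two distinct neighbours $gx$ and $gy$. A connected $2$-regular simple graph on $|D_n|=2n$ vertices is a single cycle $C_{2n}$, whose diameter is $n$, and this is exactly the desired bound. If an explicit computation is preferred, the same follows from $r^k=(yx)^k=(xy)^{\,n-k}$, giving $l_{S_f}(r^k)\le\min(2k,2(n-k))\le n$, together with the parallel estimate $r^k f=(yx)^{k-1}y=(xy)^{\,n-k}x$, giving $l_{S_f}(r^k f)\le\min(2k-1,2(n-k)+1)\le n$.

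For part \textbf{(b)} the key observation is that the relevant products collapse to a tiny set. Since $x^2=y^2=1$, $xy=r^{-1}$, and $yx=r$, every product $ss'$ with $s,s'\in S_f$ lies in $\{1,\,r,\,r^{-1}\}$. I would then conjugate, splitting $g$ into the rotation and reflection cases: rotations commute with $r$, while reflections invert it via (\ref{eq:noncommute}), so $g\cdot 1\cdot g^{-1}=1$ and $g\,r^{\pm1}g^{-1}\in\{r,r^{-1}\}$ for every $g\in D_n$. Hence each element of the form $gss'g^{-1}$ equals one of $1,r,r^{-1}$, whose lengths are $0,2,2$ respectively (note $l_{S_f}(r)=2$ precisely because $r=yx$ is a nontrivial rotation, hence neither $1$ nor a generator). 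Taking $g=1$, $s=rf$, $s'=f$ realizes $l_{S_f}(r)=2$, so the maximum is attained and $\lambda_2(D_n,S_f)=2$.

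The two enumerations are routine; the only step needing genuine care is the uniform length bound in part (a), i.e.\ identifying the Cayley graph as a $2n$-cycle (equivalently, verifying the word-length formulas for rotations and reflections). Everything else reduces to the two structural facts that conjugation preserves the rotation/reflection dichotomy and that products of the two involution generators produce only $1$ and $r^{\pm1}$.
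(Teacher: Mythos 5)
Your proof is correct, and part (b) matches the paper's argument essentially verbatim: both reduce to the observation that every product $ss'$ with $s,s'\in S_f$ lies in $\{1,r,r^{-1}\}$, that conjugation by a rotation fixes $r^{\pm1}$ while conjugation by a reflection swaps them, and that length $2$ is attained, e.g.\ by $(rf)(f)=r$. For part (a) the logical skeleton is also the same (bound $\max_g l_{S_f}(g)$ by $n$, then note every conjugate is an element of $D_n$), but your primary route to the diameter bound is genuinely different in packaging: the paper enumerates the alternating words explicitly in Tables \ref{table1} and \ref{table2} and checks that these $2n$ products together with the identity exhaust $D_n$ with maximal length $n$, whereas you identify the Cayley graph of $(D_n,S_f)$ as a connected, simple, $2$-regular graph on $2n$ vertices, hence the cycle $C_{2n}$ of diameter $n$. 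Your fallback formulas $r^k=(yx)^k=(xy)^{n-k}$ and $r^kf=(yx)^{k-1}y=(xy)^{n-k}x$ are exactly the paper's tables in closed form (with the harmless edge case $k=0$, where $r^0f=f$ is a generator). The graph-theoretic phrasing buys brevity and zero computation; what the paper's more pedestrian enumeration buys is the refinement recorded in the remark following Theorem \ref{twoFlipsSpecial}: since conjugates of the involutions $f$ and $rf$ are always reflections of the form $r^{2k}f$ or $r^{2k+1}f$ (never $r^{n/2}$), the tables let one read off the exact values $\lambda_1(D_n,S_f)=n$ for $n$ odd and $n-1$ for $n$ even, whereas your diameter argument alone does not track which elements actually occur as conjugates $gsg^{-1}$ and therefore yields only the stated inequality.
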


\begin{proof} We begin by proving
Theorem \ref{twoFlipsSpecial} (a): In order to establish this result, we make the two following observations. First, since $f,rf$ are both involution elements, if $s=f$ or if $s=rf$ then $(g^{-1}sg)(g^{-1}sg)=1$, thus $g^{-1}sg$ is also an involution. Hence, either $g^{-1}sg=r^{\frac{n}{2}}$ or $g^{-1}sg=r^{i}f$ for some integer $i$, since these are the only involution elements in $D_n$. Second, again since $f,rf$ are both involution elements, words in the generators $f,rf$ must be alternating. So it is not difficult to construct Tables \ref{table1} and \ref{table2}, tables of words where the $i$th entry is the $i$th alternating product $s_1s_2s_1 \ldots$ which is $(s_1s_2)^{\frac{i}{2}}$ for $i$ even and is $(s_1s_2)^{\frac{i-1}{2}}s_1$ for $i$ odd.

\begin{table}[h]
\caption {Alternating products of generators, $n$ odd.}
    \centering
    \bgroup
\def\arraystretch{1.5}
\begin{tabular}{|c|c|c|}\hline
   1& $f$ & $rf$                                               \\ \hline
   2& $frf=r^{n-1}=r^{-1}$ & $rff=r$                         \\ \hline
   3& $frff=fr$ & $rffrf=r^2 f$                              \\ \hline
   4& $frffrf=r^{n-2}=r^{-2}$ & $rffrff=r^2$              \\ \hline
   5& $frffrff=fr^{2}$ & $rffrffrf=r^3 f$                  \\ \hline
   \vdots&\vdots & \vdots                                     \\ \hline
   $n$& $=fr^{\frac{n-1}{2}}$ & $=r^{\frac{n+1}{2}}f$         \\ \hline
\end{tabular}
\egroup
\label{table1}
\end{table}

\begin{table}[h]
\caption {Alternating products of generators,  $n$ even.}
    \centering
    \bgroup
\def\arraystretch{1.5}
\begin{tabular}{|c|c|c|}\hline
   1                                           &    $f$ & $rf$  \\ \hline
   2                       &    $frf=r^{n-1}=r^{-1}$ & $rff=r$  \\ \hline
   3                              &    $frff=fr$ & $rffrf=r^2 f$  \\ \hline
   4              &    $frffrf=r^{n-2}=r^{-2}$ & $rffrff=r^2$  \\ \hline
   5                    &    $frffrff=fr^{2}$ & $rffrffrf=r^3 f$  \\ \hline
   \vdots                                 &    \vdots & \vdots \\ \hline
   $n$        &    $=r^{-\frac{n}{2}}$ &  $=r^{\frac{n}{2}}$ \\ \hline
\end{tabular}
\egroup
\label{table2}
\end{table}

There is an important fact regarding the entries in Tables \ref{table1} and \ref{table2}: In the odd case $fr^{\frac{n-1}{2}}=fr^{\frac{n+1}{2}}$ and in the even case $r^{-\frac{n}{2}}=r^{\frac{n}{2}}$ but otherwise all of the other elements that appear are distinct. Thus, the entries in the table together with the identity element exhausts the list of elements for $D_{n}$. Now the longest element that appears has length $n$ which shows that $\lambda_{1}(D_{n},S)\leq n$ and thereby establishes part (a).

Proof for Theorem \ref{twoFlipsSpecial} (b): To compute $\lambda_2$, we need $l_s(g^{-1}ss'g)$.  However, if $s=s'$, $l_s=1$ since each element in $S$ is symmetric.\\
  Therefore, there are two remaining cases, each containing two sub-cases.\\
\begin{description}
\item{Case 1:} $ss'=r^{-1}$
\begin{description}
\item {$g=r^if$:} Then $g^{-1}ss'g=r$, therefore, $l_s=2$.
\item{$g=r^i$:} Then $g^{-1}ss'g=r^{-1}$, therefore, $l_s=2$.
\end{description}
\item{Case 2:} $ss'=r$
  \begin{description}
\item {$g=r^if$:} Then $g^{-1}ss'g=r^{-1}$, therefore, $l_s=2$.
\item{$g=r^i$:} Then $g^{-1}ss'g=r$, therefore, $l_s=2$.
\end{description}
\end{description}
Therefore $\lambda_2(D_n,S)=2$  for all $n$.
\end{proof}

{\bf Remark:} It is actually possible to say more than what is stated in Theorem \ref{twoFlipsSpecial} (a).  First note that equality
in (a) may be achieved if $n+1$ is divisible by 4. To see this, set $s=f$ and $g^{-1}=r^{\frac{n+1}{4}}$. Then
\begin{align*}
   g^{-1}sg&=r^{\frac{n+1}{4}}fr^{-\frac{n+1}{4}}, \\
   &=r^{2\frac{n+1}{4}}f, \\
   &=r^{\frac{n+1}{2}}f,
\end{align*}
and where $\frac{n+1}{2}$ is even. The proof of (a) and in particular Tables \ref{table1} and \ref{table2} shows that this gives an element of length $n$. This is an interesting contrast with the result stated in Theorem 1 (ii) in \cite{MoultonSteel2012}, where both the order of the group and the generating set increase with $n$; whereas if $G=D_n$ and $S_f = \{f,rf\}$, only the order of the group increases with $n$. Furthermore, note that if $s=f$ then $gsg^{-1}$ produces an element of $D_{n}$ of the form $r^{2k}f$, and if $s=rf$ then $gsg^{-1}$ produces an element of $D_{n}$ of the form $r^{2k+1}f$.  Thus, upon conjugating $s=f$ and $s=rf$ with each element of $D_{n}$ we obtain each of the involution elements of $D_{n}$ except $r^{\frac{n}{2}}$. Taking this into account, Tables \ref{table1} and \ref{table2} then show that $\lambda_{1}(D_{n},S_f) = n$ if $n$ is odd and $\lambda_{1}(D_{n},S_f) = n-1$ if $n$ is even.

Now, applying Lemma \ref{lengthLemma} to Theorem \ref{twoFlipsSpecial}, we obtain the following result:

\begin{theorem}
\label{twoFlipsGeneral}
  For any order two symmetric generating set $S = \{r^{a}f,f\}$ of $D_{n}$, we have
   \begin{description}
     \item[(a)] $\lambda_{1}(D_{n},S) \leq n$,
     \item[(b)] $\lambda_{2}(D_{n},S)\leq 2$,
   \end{description}
   and furthermore, $\lambda_{1}(D_{n},S)=n$ if $n$ is odd and $\lambda_{1}(D_{n},S)=n-1$ if $n$ is even.
\end{theorem}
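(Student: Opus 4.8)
The plan is to transfer the results of Theorem~\ref{twoFlipsSpecial} for the simple set $S_f=\{f,rf\}$ to an arbitrary order two symmetric generating set $S=\{r^{a}f,f\}$ by means of the Van Dyck machinery of Section~\ref{vonDyckIso}. First I would record that, as noted after Theorem~\ref{DnPres1b}, the set $S=\{r^{a}f,f\}$ generates $D_{n}$ precisely when $\gcd(a,n)=1$, a condition built into the hypothesis that $S$ is a generating set. Under it the product $(f)(r^{a}f)=r^{-a}$ has order $n$, so both $S_f$ and $S$ realize the presentation $\langle x,y\mid x^{2}=y^{2}=1,\ |xy|=n\rangle$ of Theorem~\ref{DnPres1b}, with $x=f$, $y=rf$ for $S_f$ and $x'=f$, $y'=r^{a}f$ for $S$. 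In particular $S_f$ and $S$ share all of their relations, so Lemma~\ref{isoLemma} supplies an automorphism $\Phi\colon D_{n}\to D_{n}$ with $\Phi(f)=f$ and $\Phi(rf)=r^{a}f$ that carries generators to generators.

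Next I would sharpen the conclusion of Lemma~\ref{lengthLemma} from an equality of maximal lengths to the pointwise statement $l_{S}(\Phi(h))=l_{S_f}(h)$ for every $h\in D_{n}$. This is immediate because $\Phi$ sends a minimal $S_f$-word $s_{1}\cdots s_{k}$ to the $S$-word $\Phi(s_{1})\cdots\Phi(s_{k})$ of the same length, giving $l_{S}(\Phi(h))\le l_{S_f}(h)$, while the inverse automorphism $\Psi=\Phi^{-1}$ of Lemma~\ref{isoLemma} yields the reverse inequality.

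With pointwise length preservation in hand, the key observation is that $\Phi$ also commutes with conjugation: for $g\in D_{n}$ and $s,s'\in S_f$ we have $\Phi(gsg^{-1})=\Phi(g)\Phi(s)\Phi(g)^{-1}$ and $\Phi(gss'g^{-1})=\Phi(g)\Phi(s)\Phi(s')\Phi(g)^{-1}$. Because $\Phi$ is a bijection of $D_{n}$ restricting to a bijection $S_f\to S$, reparametrising the maxima in \eqref{eq:lambdaOneDef} and \eqref{eq:lambdaTwoDef} by $g\mapsto\Phi(g)$ and $s\mapsto\Phi(s)$ gives $\lambda_{1}(D_{n},S)=\lambda_{1}(D_{n},S_f)$ and $\lambda_{2}(D_{n},S)=\lambda_{2}(D_{n},S_f)$. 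Feeding in Theorem~\ref{twoFlipsSpecial} together with the subsequent Remark, namely $\lambda_{2}(D_{n},S_f)=2$ and $\lambda_{1}(D_{n},S_f)=n$ for $n$ odd and $n-1$ for $n$ even, then yields all three assertions at once, in fact with the exact value $\lambda_{2}(D_{n},S)=2$.

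The main obstacle, though modest, is the hypothesis-checking needed to invoke Lemmas~\ref{isoLemma} and~\ref{lengthLemma}: one must confirm that $S$ and $S_f$ share every relation, and the only nontrivial relation to verify is $|xy|=n$, which is exactly where $\gcd(a,n)=1$ is used. The remaining delicate point is conceptual rather than computational, since Lemma~\ref{lengthLemma} is stated only for maximal word length; the argument therefore hinges on upgrading it to pointwise length preservation and then combining this with the fact that an automorphism carrying generators to generators intertwines the conjugation operations defining $\lambda_{1}$ and $\lambda_{2}$. Once that intertwining is established the result is essentially a relabelling, and I note that the simpler upper bound in part~(a) can also be obtained directly from $\lambda_{1}(D_{n},S)\le\max_{h}l_{S}(h)=n$ via Lemma~\ref{lengthLemma} alone, with the full automorphism argument reserved for the exact value and for part~(b).
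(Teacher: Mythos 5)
Your proposal is correct and follows essentially the same route as the paper, which likewise obtains this theorem by transferring Theorem~\ref{twoFlipsSpecial} (and its following Remark, for the exact values) to the general set $S=\{r^{a}f,f\}$ via the Van Dyck machinery of Lemmas~\ref{isoLemma} and~\ref{lengthLemma}. You additionally make explicit two steps the paper leaves implicit --- the pointwise upgrade $l_{S}(\Phi(h))=l_{S_f}(h)$ and the fact that the automorphism $\Phi$ intertwines the conjugations defining $\lambda_{1}$ and $\lambda_{2}$ --- which is precisely what is needed for the transfer of the $\lambda_{i}$ values, rather than just maximal word lengths, to go through.
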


 \subsection{Cardinality Three Cases}
  \label{lambdaBoundsB}

  In this section we consider the bounds for $\lambda_{1}(D_{n},S)$ and $\lambda_{2}(D_{n},S)$ for the cases when the generating set $S$ has three distinct elements. These cases correspond to the presentations described by Theorems \ref{DnPres2}, \ref{DnPres3}, and \ref{threeFlipsMain} (\ref{DnPres4})-(\ref{DnPres7}).

 \subsubsection{Two involutions and one cyclic element}
  \label{lambdaBoundsB1}

  The first case we consider is when the generating set $S$ is composed of three involutions, where exactly one of which is also contained in the order $n$ cyclic subgroup. This case is only relevant whenever $n$ is even. Furthermore, the result we obtain shows how the addition of a single generator may sometimes have a significant impact on the lengths of group elements. One may view this as an instance of what can happen with regard to the values for $\lambda_{1}(G,S)$ and $\lambda_{2}(G,S)$ when one makes a slight change in the presentation of the group $G$. We have the following result.

  \begin{theorem}
\label{twoFlipsOneRotSpecial}
 Let $G=D_{n}$ and $S_f = \{f,rf,r^{\frac{n}{2}}\}$, then
   \begin{description}
     \item[(a)] $\lambda_{1}(D_{n},S_f) = \frac{n}{2}$,
     \item[(b)] $\lambda_{2}(D_{n},S_f)=\lambda_{1}(D_{n},S_f)$.
   \end{description}
\end{theorem}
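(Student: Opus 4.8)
The plan is to reduce both parts to computing the $S_f$-lengths of conjugates $g^{-1}sg$ and $g^{-1}ss'g$, exploiting that the extra generator $z := r^{n/2}$ (whose presence forces $n$ even) is a \emph{central} involution, so that $S_f$-lengths are controlled by the two-generator lengths already tabulated in Tables \ref{table1} and \ref{table2}. For part (a), since $z$ is central, $g^{-1}zg = z$ has length $1$ for every $g$. For $s \in \{f, rf\}$ the conjugates $g^{-1}sg$ are reflections, and by the Remark following Theorem \ref{twoFlipsSpecial} the $f$-conjugates realize all $r^{2k}f$ while the $rf$-conjugates realize all $r^{2k+1}f$; thus every reflection $r^i f$ occurs. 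Hence $\lambda_1(D_n, S_f) = \max\{1,\ \max_i l_{S_f}(r^i f)\}$, and the task becomes finding the longest reflection.

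To compute $l_{S_f}(r^i f)$ I would use that $z$ is a central involution: in any $S_f$-word the occurrences of $z$ collect into a single factor $z^{\epsilon}$ with $\epsilon \in \{0,1\}$, the remainder being a word in $\{f, rf\}$ equal to $r^{i + \epsilon n/2}f$. Writing $L(i) := l_{\{f,rf\}}(r^i f) = \min(2i-1,\, 2(n-i)+1)$ for the two-generator length read off Table \ref{table2}, this gives
\[ l_{S_f}(r^i f) = \min\!\Big(L(i),\ 1 + L\big(i + \tfrac{n}{2}\big)\Big), \]
with indices taken modulo $n$. Setting $m = n/2$ and inserting the piecewise form $L(i) = 2i-1$ for $1 \le i \le m$ and $L(i) = 2(n-i)+1$ for $m < i \le n$, the expression reduces to $\min(2i-1,\, 2m - 2i + 2)$ on $1 \le i \le m$, with the mirror expression on the complementary range. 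Maximizing this minimum of one increasing and one decreasing integer-valued sequence yields exactly $m$: the set of $i$ for which both arguments are $\ge m$ is an interval of width $\tfrac12$ that nonetheless always captures a single integer (checking $m$ even and $m$ odd separately), while requiring both arguments $\ge m+1$ is infeasible. Therefore $\max_i l_{S_f}(r^i f) = n/2$, and $\lambda_1(D_n, S_f) = n/2$.

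For part (b) I would enumerate the products $ss'$ with $s,s' \in \{f, rf, z\}$. Using $f^2 = (rf)^2 = z^2 = 1$, $f(rf) = r^{-1}$, $(rf)f = r$, and the centrality of $z$, the only distinct products are $1$, $r$, $r^{-1}$, $r^{n/2}f$, and $r^{n/2+1}f$. Conjugating, the identity contributes length $0$; the rotations $r^{\pm 1}$ conjugate only to $\{r, r^{-1}\}$, each of $S_f$-length $2$; and a reflection $r^j f$ conjugates precisely to the reflections of the same index-parity as $j$, so $r^{n/2}f$ and $r^{n/2+1}f$ (which have opposite parities) together realize \emph{all} reflections. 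Thus the length-$n/2$ reflection found in part (a) appears among these conjugates, giving $\lambda_2 \ge n/2$, while every conjugate of every product has length $\le \max(0, 2, n/2) = n/2$ (here using $n \ge 4$). Hence $\lambda_2(D_n, S_f) = n/2 = \lambda_1(D_n, S_f)$.

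I expect the main obstacle to be the optimization step in part (a): showing that $\max_i \min\big(L(i),\, 1 + L(i + n/2)\big)$ equals exactly $n/2$. The delicacy lies entirely in the interplay between the triangular function $L$, the half-period shift by $n/2$, and integer rounding — one must confirm both that the value $n/2$ is attained (a single integer falls into the width-$\tfrac12$ optimal window for either parity of $n/2$) and that it cannot be exceeded. Everything else is routine conjugation bookkeeping built on $z$ being central and on the alternating-word description of $\{f, rf\}$ established for Theorem \ref{twoFlipsSpecial}.
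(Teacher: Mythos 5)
Your proposal is correct and follows essentially the same route as the paper: both reduce the problem, via the centrality of $r^{n/2}$ and the fact that conjugates of $f$ and $rf$ sweep out all reflections, to computing the $S_f$-lengths of the elements $r^if$ from the two-generator lengths of Tables \ref{table1} and \ref{table2}, and both handle part (b) by observing that the products $ss'$ conjugate onto $r^{\pm1}$ and onto all reflections. Your write-up is simply more explicit where the paper lists values (the $\min\bigl(L(i),\,1+L(i+\tfrac{n}{2})\bigr)$ optimization), so no substantive difference or gap.
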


\begin{proof}
  To obtain the value for $\lambda_{1}(D_{n},S_f)$, we essentially look at the lengths of elements in Tables \ref{table1} and \ref{table2} and then examine how we may use $r^{\frac{n}{2}}$ to reduce the number of generators required to write each element of the group as product from $S_f = \{f,rf,r^{\frac{n}{2}}\}$. First, observe that for any $s\in S_{f}$ we have that $gsg^{-1}$ is either $r^{\frac{n}{2}}$, which happens exactly when $s=r^{\frac{n}{2}}$, or $gsg^{-1}=r^{k}f$ for some $0\leq k \leq n-1$. Furthermore, since $r^{k}f = fr^{-k} = fr^{n-k}$ and, with respect to $\{f,rf\}$, the length of $r^{k}f$ is the same as the length of $fr^{k-1}$, it suffices to consider only elements of the form $r^{k}f$ with $0\leq k \leq \frac{n}{2}$. Now we can begin to list these elements and their lengths with respect to $S_f = \{f,rf,r^{\frac{n}{2}}\}$. Notice that $f,rf$ have length one and $r^{\frac{n}{2}}f$ has length two. Furthermore, if we multiply by $r$ or $r^{-1}$ we add two to the length. Then, provided that $n>4$ is even, we have
  \begin{align*}
     &f  &\mbox{length 1}  \\
     &rf  &\mbox{length 1} \\
     &r^{\frac{n}{2}}f  &\mbox{length 2} \\
     &r^{2}f  &\mbox{length 3} \\
     &r^{\frac{n}{2}-1}f  &\mbox{length 4} \\
     &\vdots &\vdots \\
     &r^{\lfloor \frac{n}{4}\rfloor + 1}f  &\mbox{length $\frac{n}{2}$}
  \end{align*}
  which shows that the longest such element has length $\frac{n}{2}$. If $n=2,4$ one can see directly that $\lambda_{1}(D_{n},S_f) =2$.

  To obtain the value for $\lambda_{2}(D_{n},S_f)$, begin by noticing that $gss'g^{-1} = r,r^{-1}$ if $s,s'\in\{f,rf\}$ and $s\neq s'$, otherwise $gss'g^{-1}$ can be any element of $D_{n}$ of the form $r^{2k\pm \frac{n}{2}}f$ or $r^{2k+1\pm\frac{n}{2}}f$. Therefore, we can write any element of $D_{n}$ of the form $r^{p}f$ as $gss'g^{-1}$ where $s,s'\in \{f,rf,r^{\frac{n}{2}}\}$. This proves that $\lambda_{2}(D_{n},S_f)=\lambda_{1}(D_{n},S_f)$.
\end{proof}

Now, applying Lemma \ref{lengthLemma} to Theorem \ref{twoFlipsOneRotSpecial}, we obtain the following result.

\begin{theorem}
\label{twoFlipsOneRotGeneral}
 Let $G=D_{n}$ and $S_f = \{f,r^{a}f,r^{\frac{n}{2}}\}$, where $r^{a}$ generates an order $n$ cyclic subgroup, then
   \begin{description}
     \item[(a)] $\lambda_{1}(D_{n},S_f) = \frac{n}{2}$,
     \item[(b)] $\lambda_{2}(D_{n},S_f)=\lambda_{1}(D_{n},S_f)$.
   \end{description}
\end{theorem}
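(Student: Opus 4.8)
The plan is to deduce this general statement from the special case already settled in Theorem \ref{twoFlipsOneRotSpecial} by producing a length-preserving automorphism of $D_n$ that carries $S_f = \{f, rf, r^{\frac{n}{2}}\}$ onto the general set $S = \{f, r^{a}f, r^{\frac{n}{2}}\}$. The vehicle is the Van Dyck isomorphism of Lemma \ref{isoLemma}: as noted in the Remark following Lemma \ref{lengthLemma}, that construction is insensitive to the cardinality of the generating set, so it applies verbatim to these three-element sets. Concretely, I would set up the correspondence $\alpha\colon f \mapsto f,\ rf \mapsto r^{a}f,\ r^{\frac{n}{2}} \mapsto r^{\frac{n}{2}}$ and verify that $S_f$ and $S$ satisfy exactly the same relations, namely those of the presentation in Theorem \ref{DnPres2}, so that $\alpha$ extends to an automorphism $\Phi\colon D_n \to D_n$ sending generators to generators.

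The crux of the argument is this relation check, and it is where the hypothesis that $r^{a}$ generates the order $n$ cyclic subgroup (equivalently $\gcd(a,n)=1$) enters. Writing $x=f$, $y=r^{a}f$, $z=r^{\frac{n}{2}}$, one has $x^2=y^2=z^2=1$ immediately, while $xy = f r^{a} f = r^{-a}$ has order $n$ precisely because $\gcd(a,n)=1$. The only delicate identity is $z=(xy)^{\frac{n}{2}}$: since $n$ even and $\gcd(a,n)=1$ force $a$ to be odd, one computes $(xy)^{\frac{n}{2}}=(r^{-a})^{\frac{n}{2}}=r^{-\frac{an}{2}}=r^{\frac{n}{2}}=z$, the middle reduction using $a$ odd modulo $n$. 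Finally $xz=zx$ and $yz=zy$ follow from $f r^{\frac{n}{2}}=r^{-\frac{n}{2}}f=r^{\frac{n}{2}}f$ together with $r^{n}=1$. The same relations hold for $S_f$, so $S_f$ and $S$ share all relations and Lemma \ref{isoLemma} supplies the desired $\Phi$.

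With $\Phi$ in hand, the $\lambda_i$ values transfer directly. The construction in Lemma \ref{isoLemma}, together with the reasoning in the proof of Lemma \ref{lengthLemma}, gives not merely equality of diameters but the stronger per-element identity $l_{S}(\Phi(g)) = l_{S_f}(g)$ for every $g \in D_n$, since $\Phi$ and its inverse both carry generators to generators and hence can only preserve word length. Consequently, for each $g \in D_n$ and $s \in S_f$,
\[ l_{S_f}(g s g^{-1}) = l_{S}\!\left(\Phi(g)\,\Phi(s)\,\Phi(g)^{-1}\right), \]
and as $g$ ranges over $D_n$ and $s$ over $S_f$, the images $\Phi(g)$ range over all of $D_n$ and $\Phi(s)$ over all of $S$. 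Taking maxima yields $\lambda_{1}(D_n,S)=\lambda_{1}(D_n,S_f)$, and the identical argument applied to products $s s'$ yields $\lambda_{2}(D_n,S)=\lambda_{2}(D_n,S_f)$; invoking Theorem \ref{twoFlipsOneRotSpecial} then delivers $\lambda_{1}(D_n,S)=\frac{n}{2}$ and $\lambda_{2}(D_n,S)=\lambda_{1}(D_n,S)$. I expect the main obstacle to be bookkeeping rather than conceptual: confirming that $z=(xy)^{\frac{n}{2}}$ survives the substitution hinges on the parity observation that $\gcd(a,n)=1$ with $n$ even forces $a$ odd, and one must guard against overlooking some further relation distinguishing $S$ from $S_f$. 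This is exactly the pitfall flagged by the examples immediately after Lemma \ref{lengthLemma}, where a superficially plausible generator correspondence fails to preserve all relations.
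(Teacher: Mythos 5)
Your proposal is correct and follows exactly the paper's route: the paper proves this theorem by applying Lemma \ref{lengthLemma} (via the automorphism of Lemma \ref{isoLemma}, generalized to three-element sets as in the remark) to transfer Theorem \ref{twoFlipsOneRotSpecial} to the general set $\{f, r^{a}f, r^{\frac{n}{2}}\}$. In fact you supply details the paper leaves implicit --- the explicit relation check using $\gcd(a,n)=1$ forcing $a$ odd, and the per-element length identity needed to move from equal diameters to equal $\lambda_i$ values --- so your write-up is, if anything, more complete than the paper's one-line proof.
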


 \subsubsection{Two cyclic elements and one involution}
 \label{lambdaBoundsB2}

 Let $G=D_{n}$. Set $S_f = \{f,r,r^{n-1}\}$, where $r$ and $f$ are the group elements of $D_{n}$ previously described. Note $f^2=1$ and $rr^{n-1}=1$, thus the set $S_f$ is a symmetric generating set for $D_{n}$. We call this $S_f$ \emph{the simple chiral symmetric generating set for $D_{n}$}. The following results hold:
\begin{theorem}
\label{thmFour}
 Let $G=D_{n}$ and $S_f = \{f,r,r^{n-1}\}$, then
   \begin{description}
     \item[(a)] $\lambda_{1}(D_{n},S_f) = \left\{\begin{array}{ll}
     \lfloor \frac{n}{2} \rfloor + 1 & \mbox{if $4\divides n$,} \\
     \lfloor \frac{n}{2} \rfloor & \mbox{if $2\divides n$ and $4\ndivides n$,} \\
     \lfloor \frac{n}{2} \rfloor + 1 & \mbox{if $n$ is odd,}  \end{array}\right. $
     \item[(b)] $\lambda_{2}(D_{n},S_f) =
     \left\{\begin{array}{ll} 
      \lfloor \frac{n}{2} \rfloor & \mbox{if $4\divides n$,}\\
       \lfloor \frac{n}{2} \rfloor + 1 & \mbox{if $4\ndivides n$,}  \end{array}\right.$
   \end{description}
   where $\lfloor x \rfloor$ denotes the greatest integer $m \leq x$.
\end{theorem}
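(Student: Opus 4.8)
The plan is to first pin down the length function $l_{S_f}$ explicitly, then reduce each of $\lambda_1$ and $\lambda_2$ to a single maximization of $\min(m,n-m)$ over a residue class cut out by a parity constraint, and finally read off the four cases by tracking where $n/2$ sits relative to that class. First I would establish that, with respect to $S_f=\{f,r,r^{n-1}\}$, every rotation satisfies $l_{S_f}(r^{k})=\min(k,n-k)$ and every reflection satisfies $l_{S_f}(r^{k}f)=\min(k,n-k)+1$. This follows by viewing the Cayley graph as two $n$-cycles, one on the rotations and one on the reflections, joined by the $f$-edges $r^{k}\leftrightarrow r^{k}f$: reaching another rotation never benefits from crossing to the reflection cycle, since each crossing costs $2$ and the reflection cycle is an isometric copy of the rotation cycle, whereas reaching $r^{k}f$ forces at least one crossing, and the triangle inequality for cyclic distance shows the optimal route travels a total rotational distance $\min(k,n-k)$ and crosses exactly once.

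Next, for $\lambda_{1}$ I would conjugate each generator. Using $frf=r^{-1}$ one sees immediately that $g r^{\pm1} g^{-1}\in\{r,r^{-1}\}$, contributing length $1$, while a direct computation gives $g f g^{-1}=r^{2j}f$ whenever $g\in\{r^{j},r^{j}f\}$ (for $g=r^j$ one uses $fr^{-j}=r^{j}f$). Hence the conjugates of $f$ are exactly the even-index reflections $\{r^{2j}f\}$, so by the length formula $\lambda_{1}(D_n,S_f)=1+\max_{m\ \mathrm{even}}\min(m,n-m)$. For $\lambda_{2}$ I would enumerate the products $ss'$ with $s,s'\in\{f,r,r^{-1}\}$: the rotation products $1,r^{2},r^{-2}$ conjugate to elements of length at most $2$, while the reflection products are exactly $rf$ and $r^{-1}f$, both of odd index. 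Since conjugating $r^{m}f$ by $r^{j}$ yields $r^{m+2j}f$ and by $r^{j}f$ yields $r^{2j-m}f$, conjugation preserves the index parity when $n$ is even, so $g(ss')g^{-1}\in\{r^{m}f:m\ \mathrm{odd}\}$ and $\lambda_{2}(D_n,S_f)=1+\max_{m\ \mathrm{odd}}\min(m,n-m)$.

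The hard part — indeed essentially the whole content of the theorem — is the arithmetic of $\max_{m\equiv\epsilon\,(2)}\min(m,n-m)$, where the parity of the reachable index is exactly opposite for $\lambda_1$ (even) and $\lambda_2$ (odd). When $n$ is odd, $\gcd(2,n)=1$ forces $2j\bmod n$ and $2j+1\bmod n$ to range over all residues, so both maxima equal $(n-1)/2$ and both $\lambda_i$ equal $\lfloor n/2\rfloor+1$. When $n$ is even the index parity is a genuine invariant mod $n$, and the maximizer $m=n/2$ of $\min(m,n-m)$ is reachable precisely when $n/2$ has the required parity: for $\lambda_{1}$ (even indices) this happens iff $4\mid n$, giving $\min=n/2$ and length $\lfloor n/2\rfloor+1$, whereas if $4\nmid n$ the best even index is $n/2\pm1$, giving $\min=n/2-1$ and length $\lfloor n/2\rfloor$; for $\lambda_{2}$ (odd indices) the two roles swap, so $4\mid n$ forces the best odd index to be $n/2\pm1$ (length $\lfloor n/2\rfloor$), while $4\nmid n$ attains $m=n/2$ (length $\lfloor n/2\rfloor+1$). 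This reproduces precisely the four displayed formulas.

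I would close by verifying the two loose ends. First, the conjugating element $g$ can always be chosen to realize the claimed optimal index, since $r^{j}fr^{-j}=r^{2j}f$ and $r^{j}(rf)r^{-j}=r^{2j+1}f$ sweep out all even, respectively all odd, residues as $j$ varies. Second, the reflection-conjugate bounds dominate the length-$\leq 2$ rotation conjugates for all $n\geq 3$, since $\lfloor n/2\rfloor+1\geq 2$ always and $\lfloor n/2\rfloor\geq 2$ whenever $4\mid n$ and $n\geq 4$; the handful of smallest values of $n$ can be confirmed by direct inspection to guard against an off-by-one error in the parity analysis, which is the only place the argument is delicate.
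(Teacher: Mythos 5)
Your proposal is correct and follows essentially the same route as the paper: both rest on the observations that $l_{S_f}(r^mf)=\min(m,n-m)+1$, that conjugates of $f$ are exactly the even-index reflections $r^{2j}f$ while the reflection products $ss'$ conjugate to odd-index reflections, and that the case split comes from the parity of $n/2$. The only difference is organizational — you characterize the full reachable sets and maximize $\min(m,n-m)$ over a parity class, whereas the paper works top-down by testing whether the longest elements $\phi_0$ (and, failing that, $\phi_{-1}$) are realizable as conjugates — and your explicit length formula makes rigorous what the paper leaves to ``direct examination.''
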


\begin{proof} We proceed by first looking for the elements of $D_{n}$ whose reduced word length, in terms of elements of $S_f$, is maximal. Choose a representative of such elements, call it $\phi_{0}$. We must determine if and how $\phi_{0}$ can be realized as a conjugate of the form $g^{-1}sg$ or $g^{-1}ss'g$.  If $\phi_{0}$ can be realized as a conjugate of the form $g^{-1}sg$ then we have found $\lambda_{1}$. On the other hand, if $\phi_{0}$ can be realized as a conjugate of the form $g^{-1}ss'g$ then we have found $\lambda_{2}$. If $\phi_{0}$ can't be realized in this conjugate form then we look for a next longest element of $D_{n}$, call it $\phi_{-1}$, and then proceed as just described. If necessary, continue to reduce to ``the next longest element'' until this process eventually ends and yields the values of $\lambda_{1}$ and $\lambda_{2}$.

Now, some elements with maximal length in $D_{n}$ with respect to the generating set $S _f$ are
\begin{align}
  \phi_{0}&= \left\{\begin{array}{ll} r^{\frac{n}{2}}f & \mbox{when $n$ is even,} \\ r^{\lfloor \frac{n}{2} \rfloor}f,\ \text{or}\ r^{\lceil \frac{n}{2} \rceil}f & \mbox{when $n$ is odd,}  \end{array} \right.
\end{align}
with $l_{S}\left( r^{\frac{n}{2}}f \right)=l_{S}\left( r^{\lfloor \frac{n}{2} \rfloor}f \right)=l_{S}\left( r^{\lceil \frac{n}{2} \rceil}f \right)=\lfloor \frac{n}{2} \rfloor + 1$. These facts come from a direct examination of the elements of $D_{n}$ as listed in (\ref{eq:DnDescription}). 

For $s=f$, and $g^{-1}=r^{k}$ with $0\leq k \leq \lfloor \frac{n}{2} \rfloor$, we have that
\begin{align}
 g^{-1}sg  &= g^{-1}fg  \\
           &= r^{k}fr^{-k}, \\
           &= r^{2k}f, \label{eq:2k}
\end{align}
where we have made use of the identities from (\ref{eq:rInverses})-(\ref{eq:flipSymmetry}).

\begin{case}
Observe that if $n$ is divisible by 4 then we can choose $k=\frac{n}{4}$ and achieve $g^{-1}sg=\phi_{0}=r^{\frac{n}{2}}f$. This shows that $\lambda_{1}(D_{n},S) = \frac{n}{2}+1$ whenever $4\divides n$.
\end{case}

\begin{case}
On the other hand, if $n$ is even and not divisible by 4 then it is not possible to achieve $\phi_{0}=r^{\frac{n}{2}}f$ with an element of the form $g^{-1}sg$. Instead, take $k=\lfloor \frac{n}{4} \rfloor$ which gives $g^{-1}sg=r^{\frac{n}{2}-1}f=\phi_{-1}$, the next longest element of $D_{n}$ so that in this case $\lambda_{1}(D_{n},S) = \frac{n}{2}$.
\end{case}

 \begin{case}
 If $n$ is odd, then take $k=\lfloor \frac{n}{4} \rfloor$ and we get $g^{-1}sg = r^{\lfloor \frac{n}{2} \rfloor}f = \phi_{0}$ so again $\lambda_{1}(D_{n},S) = \frac{n}{2}+1$.
 \end{case}

 This proves theorem \ref{thmFour} (a).

To obtain the result of theorem \ref{thmFour} part (b), the proof is similar to that for part (a) except that now the relevant conjugates of the form $g^{-1}ss'g$ that lead to either an element of maximal length $\phi_{0}$ (only possible if $n$ is odd) or a next longest element $\phi_{-1}$ of $D_{n}$, which are of the form $r^{2k+1}f$ or $r^{2k-1}f$. Notice these elements are products of $f$ with odd powers of $r$. These come from taking $g^{-1}=r^{-k}$ with $0\leq k \leq \lfloor \frac{n}{2} \rfloor$ and either $s=r$, $s'=f$; or $s=r^{-1}$, $s'=f$.

Finally, observe that if $n$ is divisible by 4 then the longest element is an even multiple of $r$ times $f$ and thus is not realizable as $g^{-1}ss'g$. However, in this case a next longest element is realizable as $g^{-1}ss'g$  so that $\lambda_{2}(D_{n},S) = \frac{n}{2}$ whenever $4\divides n$. If $n$ is even but not divisible by 4 then the longest element $\phi_{0}$ is a product of $f$ with an odd power of $r$ and thus is realizable as a conjugate of the form $g^{-1}ss'g$. Thus, in the case $n$ is even and $4 \ndiv n$ we have $\lambda_{2}(D_{n},S) = \frac{n}{2}+1$.  Lastly, if $n$ is odd then since $l_{S}\left( r^{\lfloor \frac{n}{2} \rfloor}f \right)=l_{S}\left( r^{\lceil \frac{n}{2} \rceil}f \right)=\lfloor \frac{n}{2} \rfloor + 1$ one of $\lfloor \frac{n}{2} \rfloor$ or $\lceil \frac{n}{2} \rceil$ is odd and hence is realizable as $g^{-1}ss'g$. Therefore $\lambda_{2}(D_{n},S) =\lfloor \frac{n}{2} \rfloor + 1$.
 \end{proof}

Combining the result of Theorem \ref{thmFour} together with the remark following Lemma \ref{lengthLemma} from section \ref{vonDyckIso} gives the following:
\begin{theorem}
\label{thmFive}
 Let $G=D_{n}$ and $S = \{r^{a}f,r^{b},r^{-b}\}$ and suppose that $r^{b}$ generates an order $n$ cyclic subgroup of $D_{n}$, then
   \begin{description}
     \item[(a)] $\lambda_{1}(D_{n},S_f) = \left\{\begin{array}{ll}
     \lfloor \frac{n}{2} \rfloor + 1 & \mbox{if $4\divides n$,} \\
     \lfloor \frac{n}{2} \rfloor & \mbox{if $2\divides n$ and $4\ndivides n$,} \\
     \lfloor \frac{n}{2} \rfloor + 1 & \mbox{if $n$ is odd,}  \end{array}\right. $
     \item[(b)] $\lambda_{2}(D_{n},S_f) =
     \left\{\begin{array}{ll} 
      \lfloor \frac{n}{2} \rfloor & \mbox{if $4\divides n$,}\\
       \lfloor \frac{n}{2} \rfloor + 1 & \mbox{if $4\ndivides n$,}  \end{array}\right.$
   \end{description}
   where $\lfloor x \rfloor$ denotes the greatest integer $m \leq x$.
\end{theorem}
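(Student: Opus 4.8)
Theorem \ref{thmFive} generalizes the concrete computation of Theorem \ref{thmFour} from the specific generating set $S_f = \{f,r,r^{n-1}\}$ to an arbitrary generating set of the form $S = \{r^{a}f,r^{b},r^{-b}\}$ where $r^{b}$ generates the order $n$ cyclic subgroup. The plan is to show that this is an immediate consequence of Theorem \ref{thmFour} combined with the length-preservation machinery of Lemma \ref{lengthLemma} (as refined by the remark following it). The key observation is that both $S_f$ and $S$ are instances of the same abstract presentation, namely the presentation classified in Theorem \ref{DnPres3}: each consists of one involution together with a generator of the cyclic subgroup and its inverse. Therefore the two generating sets share all the same relations, and the general machinery should transfer the $\lambda_i$ values directly.

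\medskip

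First I would verify that $S = \{r^{a}f, r^{b}, r^{-b}\}$ is genuinely a symmetric generating set realizing the presentation of Theorem \ref{DnPres3}. Setting $x = r^{a}f$, $y = r^{b}$, $z = r^{-b}$, one checks $x^2 = 1$, $|y| = |z| = n$ (using the hypothesis that $r^{b}$ generates the order $n$ cyclic subgroup), $yz = zy = 1$, and the mixed relations $yx = xz$ and $|xy| = |xz| = 2$, all of which follow from the dihedral relations (\ref{eq:rInverses})--(\ref{eq:flipSymmetry}). Since $S_f = \{f, r, r^{n-1}\}$ realizes the same abstract presentation under $x = f$, $y = r$, $z = r^{-1}$, both $S_f$ and $S$ share all the same relations in the sense required by Lemma \ref{isoLemma}.

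\medskip

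Next I would invoke the generalized version of Lemma \ref{lengthLemma} described in the remark following it, applied to the cardinality-three setting: since $S_f$ and $S$ share all their relations and map generators to generators, the induced map extends to an automorphism of $D_n$ that is length-preserving, so $l_{S_f}(g) = l_S(\Phi(g))$ for the corresponding automorphism $\Phi$. Because $\lambda_1$ and $\lambda_2$ are defined purely in terms of lengths of conjugates $g^{-1}sg$ and $g^{-1}ss'g$, and the automorphism carries such conjugates to conjugates of the image generators while preserving length, it follows that $\lambda_1(D_n, S) = \lambda_1(D_n, S_f)$ and $\lambda_2(D_n, S) = \lambda_2(D_n, S_f)$. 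Substituting the values established in Theorem \ref{thmFour} then yields exactly the piecewise formulas in parts (a) and (b).

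\medskip

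The main obstacle I anticipate is not computational but one of rigor in the relation-matching step: one must be careful that the assignment $f \mapsto r^{a}f$, $r \mapsto r^{b}$, $r^{-1} \mapsto r^{-b}$ really does preserve \emph{all} the imposed relations, not merely the orders of individual generators. The cautionary examples following Lemma \ref{lengthLemma} (for instance the $n=9$ case where a product order drops from $n$ to a proper divisor) show that a naive generator correspondence can fail precisely when some product relation is broken. Here, however, the hypothesis that $r^{b}$ generates the full order $n$ cyclic subgroup guarantees $\langle y \rangle = \langle z \rangle = \langle r \rangle$ and that $xy = r^{a+b}f$ and $xz = r^{a-b}f$ are both involutions, so the orders $|xy| = |xz| = 2$ match those in $S_f$. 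Once this relation-preservation is confirmed, the conclusion follows immediately from Lemma \ref{lengthLemma} and Theorem \ref{thmFour} with no further case analysis required.
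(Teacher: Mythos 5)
Your proposal is correct and follows essentially the same route as the paper: the paper's proof of Theorem \ref{thmFive} is precisely to combine Theorem \ref{thmFour} with the generalized (cardinality-three) version of Lemma \ref{lengthLemma} described in the remark after it, using that $S_f=\{f,r,r^{-1}\}$ and $S=\{r^af,r^b,r^{-b}\}$ realize the same abstract presentation from Theorem \ref{DnPres3}. Your explicit verification that the assignment $f\mapsto r^af$, $r\mapsto r^b$, $r^{-1}\mapsto r^{-b}$ preserves all the defining relations, and that the resulting length-preserving automorphism carries conjugates $g^{-1}sg$ and $g^{-1}ss'g$ to conjugates of the image generators, is exactly the justification the paper leaves implicit.
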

This theorem establishes the values of $\lambda_{1}(D_{n},S)$ and $\lambda_{2}(D_{n},S)$ for any presentation with a form as in Theorem \ref{DnPres3}.

 \subsubsection{Three involution elements}
 \label{lambdaBoundsB3}

 In this section we discuss bounds for $\lambda_{1}(D_{n},S)$ and $\lambda_{2}(D_{n},S)$ with $S$ a generating set of the form described in Theorem \ref{threeFlipsMain} (\ref{DnPres4}) - (\ref{DnPres7}). While proving a bound for $\lambda_{2}(D_{n},S)$ in this case is straightforward, doing so for $\lambda_{1}(D_{n},S)$ is challenging and a complete proof is currently elusive. Therefore, in this section we present, by way of Theorem  \ref{3flip}, strong evidence for the following conjecture:

 \begin{conj}
For a generating set $S$ composed of three involutions, none of which belong to the order $n$ cyclic subgroup of $D_n$, we conjecture that $\lambda_{1}(D_{n},S) \leq \lfloor \frac{n}{2} \rfloor  + 1$.
\end{conj}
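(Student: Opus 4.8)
The plan is to reduce the conjecture to a single covering-radius estimate on $\mathbb{Z}_n$ and then to attack that estimate using the number-theoretic data of the generating set. By Lemma \ref{lengthLemma} it suffices to treat one concrete realization of each abstract presentation in Theorem \ref{threeFlipsMain}, so I would fix $S = \{f, r^a f, r^b f\}$ with $0 < a < b < n$. Every element of $S$ is an involution lying outside the order $n$ cyclic subgroup, and conjugation preserves involutions, so each conjugate $g s g^{-1}$ is again of the form $r^k f$ (the central involution $r^{n/2}$, when $n$ is even, forms its own conjugacy class and is not involved). The identities $r^j (r^c f) r^{-j} = r^{c + 2j} f$ and $(r^j f)(r^c f)(r^j f) = r^{2j - c} f$ show that the conjugates of a fixed generator $r^c f$ are exactly the involutions $r^k f$ with $k \equiv c \pmod{\gcd(2,n)}$. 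Since $S$ generates $D_n$, at least one of $a, b$ is odd whenever $n$ is even (otherwise all of $0, a, b$ are even and $S$ generates only even rotations), so the union over $c \in \{0, a, b\}$ is all of $\mathbb{Z}_n$. Hence $\lambda_1(D_n, S) = \max_{0 \le k \le n-1} l_S(r^k f)$, and the conjecture is equivalent to the bound $l_S(r^k f) \le \lfloor n/2 \rfloor + 1$ for every $k$.

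Next I would establish a length formula. Because each generator is an involution, a word of even length is a rotation and a word of odd length is an involution, so $l_S(r^k f)$ is always odd; moreover, appending a two-letter block $(r^c f)(r^{c'} f)$ to an involution shifts its exponent by $c' - c \in D' := \{\pm a, \pm b, \pm (b-a)\}$, while the three generators realize the exponents $\{0, a, b\}$ at length one. It follows that $l_S(r^k f) = 1 + 2\, m(k)$, where $m(k)$ is the graph distance from the source set $\{0, a, b\}$ to $k$ in the circulant Cayley graph of $\mathbb{Z}_n$ with connection set $D'$. The target bound is therefore equivalent to the covering-radius estimate $m(k) \le \lfloor n/4 \rfloor$ for all $k$. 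The $\lambda_2$ half is immediate in this framework: for distinct generators the product $(r^c f)(r^{c'} f) = r^{c - c'}$ is a nonidentity rotation, and its conjugates are the rotations $r^{\pm(c - c')}$, each of length exactly two, so $\lambda_2(D_n, S) = 2$ with no further work. This is why only $\lambda_1$ is genuinely at issue.

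The core task is to bound the covering radius of $\{0, a, b\}$ under $D'$. Writing $k \equiv \alpha a + \beta b \pmod n$ converts a shortest $D'$-path into a shortest walk in the $(\alpha, \beta)$ plane using the moves $\pm(1, 0)$, $\pm(0, 1)$, and $\pm(1, -1)$ (the last coming from $\mp(b - a)$), so $m(k)$ is the minimum, over all representatives $(\alpha, \beta)$ of $k$ modulo the relation lattice $\Lambda = \{(\alpha, \beta) : \alpha a + \beta b \equiv 0 \pmod n\}$, of the triangular-lattice word length, which equals $|\alpha| + |\beta|$ when $\alpha$ and $\beta$ share a sign and $\max(|\alpha|, |\beta|)$ when they have opposite signs. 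I would then split along the four cases of Theorem \ref{threeFlipsMain}, using the explicit divisibility conditions on $a$ and $b$ collected in Section \ref{NumThrConditions}: when one of $a$, $b$, or $b - a$ has small order in $\mathbb{Z}_n$ the corresponding step is large and sweeps out a whole coset of a cyclic subgroup in few moves, and the generating hypothesis $H_1 H_2 = \langle r \rangle$ forces these cosets to interlock so that every residue admits a representative of small triangular-lattice length.

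The hard part will be making this final estimate uniform in $(a, b, n)$. Circulant graphs have no general closed-form diameter, and here the single inequality $m(k) \le \lfloor n/4 \rfloor$ must hold for every admissible triple at once, including case \ref{DnPres7}, where no individual step generates and all three of $a$, $b$, $b - a$ are comparatively small. In that regime one expects the extremal residues $k$ to lie near $\pm n/2$, and bounding their distance requires a simultaneous, balanced choice of $\alpha$ and $\beta$ rather than a greedy one-parameter argument; controlling the triangular-lattice covering radius of $\Lambda$ against the single threshold $\lfloor n/4 \rfloor$ is exactly where a case-free proof appears to break down. This is the step I would expect to resist a complete argument, which is consistent with the statement being offered as a conjecture supported by Theorem \ref{3flip} rather than as a theorem.
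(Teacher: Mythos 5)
The statement you are attacking is presented in the paper as a conjecture, not a theorem: the paper itself does not prove it, and only offers as evidence the weaker existence result Theorem \ref{3flip}, established by direct word manipulations for the two concrete sets $\{f,rf,r^{2}f\}$ (Lemma \ref{3fliplem1}) and $\{f,rf,r^{3}f\}$ (Lemma \ref{3fliplem2}). Your proposal, as you yourself say in the last paragraph, also does not prove the statement: the uniform covering-radius estimate $m(k)\leq\lfloor n/4\rfloor$ over \emph{all} admissible triples $(a,b,n)$ --- in particular in case (\ref{DnPres7}) of Theorem \ref{threeFlipsMain}, where no pair of generators suffices and all of $a$, $b$, $b-a$ have nontrivial common factors with $n$ --- is exactly the open content of the conjecture, and you leave it unresolved. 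So there is a genuine gap, but it is the inherent gap of the conjecture rather than an error you introduced; you should not present this as a proof, and you correctly do not.

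That said, your reductions are sound and are genuinely different from (and more systematic than) what the paper records. The conjugation identities $r^{j}(r^{c}f)r^{-j}=r^{c+2j}f$ and $(r^{j}f)(r^{c}f)(r^{j}f)=r^{2j-c}f$, together with the observation that $\gcd(a,b,n)=1$ forces one of $a,b$ to be odd when $n$ is even, correctly yield $\lambda_{1}(D_{n},S)=\max_{k}l_{S}(r^{k}f)$; the parity argument (even-length words are rotations, odd-length words are reflections) correctly gives $l_{S}(r^{k}f)=1+2m(k)$ with $m(k)$ the distance from the source set $\{0,a,b\}$ to $k$ in the circulant graph on $\mathbb{Z}_{n}$ with connection set $\{\pm a,\pm b,\pm(b-a)\}$; and since $m(k)$ is an integer, the conjectured bound is indeed equivalent to $m(k)\leq\lfloor n/4\rfloor$. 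Your one-line treatment of $\lambda_{2}$ reproduces the paper's final theorem of Section \ref{lambdaBoundsB3}, that $\lambda_{2}(D_{n},S)=2$. By contrast, the paper's Lemmas \ref{3fliplem1} and \ref{3fliplem2} work entirely inside the group with the auxiliary rotations $A$, $B$, $C$ and never isolate the lattice structure; your framing buys a clean equivalent formulation (a covering-radius problem for a rank-two relation lattice under the triangular-lattice norm) that is a plausible route either to a proof or to a counterexample search, whereas the paper's computations buy unconditional but narrow results for two families of generating sets. The honest conclusion is that your proposal is a correct reduction plus an unproven core estimate, which matches the status of the statement as a conjecture.
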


Before describing the evidence for this conjecture we discuss some properties of the sets $S_{1} = \{f,rf,r^{2}f\}$ and $S_{2}=\{f,rf,r^{3}f\}$.  Since for any $n\geq 2$ the pair $\{f,rf\}$ generates $D_{n}$ we have that $S_{1}$ generates $D_{n}$ if $n>2$, and $S_{2}$ generates $D_{n}$ if $n > 3$. Furthermore,  depending on whether $n$ is odd or even,  $S_{1}$ is representative of the case in Theorem \ref{threeFlipsMain} (\ref{DnPres4}) or (\ref{DnPres5}) respectively, and depending on $n$, $S_{2}$ could correspond to any of the cases in Theorem \ref{threeFlipsMain} (\ref{DnPres4}) - (\ref{DnPres6}). We will show that  $\lambda_{1}(D_{n}S_{1}),\lambda_{1}(D_{n},S_2)\leq \lfloor \frac{n}{2} \rfloor  + 1$ thereby establishing

 \begin{theorem}\label{3flip}
  Consider the dihedral group $D_{n}$ with $n \geq 2$. Then there exists a generating set of the form $S=\{f,r^{a}f,r^{b}f\}$ such that $\lambda_{1}(D_n,S) \leq \lfloor \frac{n}{2} \rfloor  + 1$, where $a\neq b$ and $a,b \geq1$.
\end{theorem}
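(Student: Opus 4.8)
The plan is to prove this existence statement by exhibiting the explicit set $S_{1}=\{f,rf,r^{2}f\}$ (so $a=1$, $b=2$), which by the discussion preceding Theorem~\ref{3flip} generates $D_{n}$ for every $n>2$; the case $n=3$ falls out of the general count below and $n=2$ is degenerate and handled by hand. The first step is a reduction. Each element of $S_{1}$ is an involution and none equals $r^{n/2}$, so every conjugate is again a reflection: a direct computation using (\ref{eq:noncommute})--(\ref{eq:flipSymmetry}) gives $r^{j}(r^{c}f)r^{-j}=r^{2j+c}f$ and $(r^{j}f)(r^{c}f)(r^{j}f)^{-1}=r^{2j-c}f$. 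Hence the set of all conjugates $\{gsg^{-1}\}$ is contained in $\{r^{k}f : 0\le k\le n-1\}$, and therefore
\[ \lambda_{1}(D_{n},S_{1})\ \le\ \max_{0\le k\le n-1} l_{S_{1}}(r^{k}f). \]
It thus suffices to bound the length of every reflection, which sidesteps having to identify exactly which reflections occur as conjugates.

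Next I would compute these lengths. Because the generators are involutions, any word reduces to an alternating product, so a word of odd length $2t+1$ equals $r^{E}f$ with $E=c_{1}-c_{2}+\cdots+c_{2t+1}$ and each $c_{i}\in\{0,1,2\}$; this expression has $t+1$ positive slots and $t$ negative slots. As the $c_{i}$ range over the \emph{consecutive} values $\{0,1,2\}$, the exponent $E$ attains every integer in the interval $[-2t,\,2t+2]$. Since even-length words produce only rotations, reflections arise only at odd length, and the reachable intervals are nested; hence $l_{S_{1}}(r^{k}f)=2t^{\ast}(k)+1$, where $t^{\ast}(k)$ is the least $t$ for which $k$ is congruent modulo $n$ to some element of $[-2t,2t+2]$, i.e.\ for which $k\in\{0,\dots,2t+2\}\cup\{n-2t,\dots,n-1\}\pmod n$.

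The third step identifies the worst case. At level $t$ the residues not yet covered form the block $\{2t+3,\dots,n-2t-1\}$, which is empty precisely when $4t\ge n-3$. Consequently $\max_{k}t^{\ast}(k)=\lceil (n-3)/4\rceil$ and
\[ \max_{0\le k\le n-1} l_{S_{1}}(r^{k}f)\ =\ 2\left\lceil \tfrac{n-3}{4}\right\rceil+1. \]
A short verification in the four residue classes of $n\bmod 4$ then shows this quantity is at most $\lfloor n/2\rfloor+1$ (with equality when $n\equiv 0,1\pmod 4$), which combined with the reduction yields $\lambda_{1}(D_{n},S_{1})\le\lfloor n/2\rfloor+1$ and proves the theorem.

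The reason the argument goes through cleanly is that we are free to choose the consecutive step set $\{0,1,2\}$, which makes each reachable-exponent set an unbroken interval and so makes $t^{\ast}(k)$ transparent to control. I expect the genuine obstacle to appear in any attempt to run the same argument uniformly over all $\{f,r^{a}f,r^{b}f\}$: the companion set $S_{2}=\{f,rf,r^{3}f\}$ gives further evidence for the conjecture, but its exponent steps $\{0,1,3\}$ are not consecutive, so the reachable exponents at a given length have gaps and the worst-case reflection length requires a finer case analysis. This gap phenomenon for general exponents is precisely what blocks the stronger uniform bound and keeps it at the level of a conjecture.
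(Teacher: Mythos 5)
Your proof is correct, and it takes a genuinely different route from the paper's. The paper derives Theorem~\ref{3flip} from Lemma~\ref{3fliplem1} (or alternatively Lemma~\ref{3fliplem2}): it first bounds the lengths of the rotations $r^{m}$, $0<m\leq\lfloor n/2\rfloor$, by explicitly writing them as words in $A=(rf)(f)=r$ and $B=(r^{2}f)(f)=r^{2}$, and then bounds each reflection $r^{p}f$ by appending one more generator to such a word; in effect it bounds the length of \emph{every} element of $D_{n}$ and reads off the bound on $\lambda_{1}$ from that. You instead observe at the outset that conjugates of the generators are always reflections (via $r^{j}(r^{c}f)r^{-j}=r^{2j+c}f$ and $(r^{j}f)(r^{c}f)(r^{j}f)=r^{2j-c}f$), so rotation lengths are irrelevant to $\lambda_{1}$, and then you compute reflection lengths \emph{exactly}: every odd-length word is an alternating sum with steps in $\{0,1,2\}$, the reachable exponents at length $2t+1$ form the interval $[-2t,\,2t+2]$, and the covering count gives $\max_{k} l_{S_{1}}(r^{k}f)=2\lceil (n-3)/4\rceil+1$, which you then compare with $\lfloor n/2\rfloor+1$ in each residue class mod $4$. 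Your approach buys more than the paper's: an exact value for the maximal reflection length, identification of the classes $n\equiv 0,1 \pmod 4$ where the bound is tight, and a structural explanation (gaps in the reachable-exponent sets when the steps are not consecutive) of why the argument resists uniform extension to all $\{f,r^{a}f,r^{b}f\}$ --- precisely the obstruction behind the paper's conjecture. The paper's cruder constructive approach buys flexibility: essentially the same computation handles $S_{2}=\{f,rf,r^{3}f\}$ in Lemma~\ref{3fliplem2} with only minor arithmetic changes. One small gap on your side: you assert $n=2$ is ``handled by hand'' but never do it; note the paper shares this blind spot (its lemmas require $n>2$ and $n>3$). For $n=2$ the set collapses to $\{f,rf\}$ and $D_{2}$ is abelian, so $\lambda_{1}=1\leq\lfloor n/2\rfloor+1$ trivially --- add that sentence to close the case.
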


Theorem \ref{3flip} is an immediate consequence of either of Lemma \ref{3fliplem1} or Lemma \ref{3fliplem2} below.  However, before establishing these two lemmas we make some observations regarding the two sets $S_{1}$ and $S_{2}$. There are three points that are useful to note:
\begin{enumerate}
  \item[(i)] The calculations for $\lambda_{1}(D_{n},S)$ where $S=S_{1}$ or $S=S_{2}$ are independent of $n$.
  \item[(ii)] In order to obtain a bound on the length of the elements of the order $n$ cyclic subgroup of $D_{n}$, it suffices to do so for each $r^{m}$ where $0 < m \leq \lfloor \frac{n}{2} \rfloor$. This is because an element and its inverse have the same length, so once we have the length for the first $\lfloor \frac{n}{2} \rfloor$ powers of $r$ we obtain the others using the inverse property.
  \item[(iii)] In fact, once you obtain a bound on the lengths of the powers of $r$, you get a bound on the lengths of flips by way of the following computation. Let $S = \{f,r^{a}f,r^{b}f\}$. Consider $r^{m}f$ then
      \[ r^{m}f = r^{m-a}\left(r^{a}f \right) = r^{m-b}\left(r^{b}f \right). \]
      Now, use the bound on $r^{m}$, $r^{m-a}$, or $r^{m-b}$, whichever gives the shortest length, then add one to account for $f$, $r^{a}f$, or $r^{b}f$, whichever is appropriate.
\end{enumerate}
Observation (iii) yields the following conjecture, whose truth does not influence the results of Lemma \ref{3fliplem1} or Lemma \ref{3fliplem2} below.
      \begin{conj}
      For all $0< m < n$, $l_S(r^{m}f) \leq  \max\limits_{0 \leq k <n}\left\{ l_S(r^k)\right\}$, where $l_S$ is the length defined in previous sections.
      \end{conj}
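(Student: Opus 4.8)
The plan is to turn the conjecture into a statement about distances in a circulant graph and then to isolate a single structural obstruction. Observation (iii) gives $r^m f = r^{m-c}\,(r^c f)$ for each $c \in \{0,a,b\}$, whence $l_S(r^m f) \le \min\{l_S(r^m),\, l_S(r^{m-a}),\, l_S(r^{m-b})\} + 1$. This is in fact an equality: a minimal expression for the reflection $r^m f$ has odd length, and deleting its last letter yields an even-length word for one of $r^m$, $r^{m-a}$, $r^{m-b}$ whose length is one smaller, so
\[
 l_S(r^m f) = \min\{l_S(r^m),\, l_S(r^{m-a}),\, l_S(r^{m-b})\} + 1 .
\]
Thus, writing $M := \max_{0\le k<n} l_S(r^k)$, it suffices to prove $\min\{l_S(r^m),\, l_S(r^{m-a}),\, l_S(r^{m-b})\} + 1 \le M$ for every $m$.

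Next I would exploit that every generator is a reflection, so each rotation is a product of an even number of generators and every $l_S(r^k)$ is even; in particular $M$ is even. Grouping the letters of an even word into consecutive pairs $(r^{c}f)(r^{c'}f)=r^{c-c'}$ shows $l_S(r^k)=2\delta(k)$, where $\delta(k)$ is the graph distance from $0$ to $k$ in the circulant Cayley graph on $\mathbb{Z}/n\mathbb{Z}$ with connection set $T=\{\pm a,\pm b,\pm(a-b)\}$; correspondingly $M=2\Delta$ with $\Delta$ the diameter of this graph. Because each term in the minimum is even and $M$ is even, the desired bound $\min+1\le M$ is equivalent to requiring the minimum to be strictly smaller than $M$, i.e.\ to the assertion that the three rotations $r^m,r^{m-a},r^{m-b}$ are not all of maximal length. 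Since $a,\,b,\,b-a\in T$, the exponents $m,\,m-a,\,m-b$ are pairwise joined by single $T$-steps and so form a triangle in the circulant graph; the conjecture therefore reduces to the clean claim that the antipodal set $A=\{k:\delta(k)=\Delta\}$ contains no such triangle.

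The triangle-freeness of $A$ is the heart of the matter, and the step I expect to be the main obstacle. Since $l_S(r^m)=2\delta(m)$, the bound is automatic unless $m\in A$, so I would fix $m\in A$ and try to force one of $m-a$, $m-b$ out of $A$: taking a $T$-geodesic from $0$ to $m$, if some geodesic ends in a $+a$ or $+b$ step then its penultimate vertex is $m-a$ or $m-b$ and lies at distance $\Delta-1<\Delta$, which finishes the proof. The difficulty is that one cannot always arrange the last geodesic step to be $+a$ or $+b$, and $A$ can be large and irregularly placed for general $(a,b,n)$; pinning it down appears to need genuine number-theoretic control of how $a$, $b$, and $n$ interact --- the very data that separates the five cases of Theorem \ref{threeFlipsMain}. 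One must in particular guard against small $n$, where the claim is delicate: for $n=5$ and $S=\{f,rf,r^2f\}$ every nonidentity rotation already attains the maximal length $M=2$, so $A=\mathbb{Z}/5\mathbb{Z}\setminus\{0\}$ does contain a triangle and $l_S(r^3f)=3>M=2$. For the concrete families $S_1=\{f,rf,r^2f\}$ and $S_2=\{f,rf,r^3f\}$ behind Theorem \ref{3flip}, however, $\delta$ takes an explicit $\lceil d/2\rceil$-type form and $A$ is a short interval near the antipode, so triangle-freeness can be checked by hand; extracting a uniform criterion that guarantees it for all admissible $(a,b,n)$ is precisely what keeps the general conjecture out of reach.
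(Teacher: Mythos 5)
First, a point of orientation: the paper never proves this statement. It appears only as a conjecture in Section \ref{lambdaBoundsB3}, offered with ``strong evidence'' and the remark that its truth does not affect Lemma \ref{3fliplem1} or Lemma \ref{3fliplem2}, so there is no proof of record to compare yours against. Your proposal is likewise not a proof, as you acknowledge --- but the situation is stronger than ``the main step is out of reach'': the $n=5$ example you mention in passing is a genuine counterexample, so the statement as written is false and no strategy can complete your plan. The verification is immediate. Since every element of $S=\{f,rf,r^{2}f\}$ is a reflection, any word equal to a rotation has even length and any word equal to a reflection has odd length. Each of $r,r^{2},r^{3}=r^{-2},r^{4}=r^{-1}$ is a product of two generators, so $\max_{0\le k<5}l_{S}(r^{k})=2$; on the other hand $r^{3}f\notin S$ is a reflection, so $l_{S}(r^{3}f)$ is odd and at least $3$, and indeed $r^{3}f=(rf)(f)(r^{2}f)$ gives $l_{S}(r^{3}f)=3>2$. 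The same failure occurs for $n=4$ with the same $S$. Thus the conjecture needs an additional hypothesis (for $S=\{f,rf,r^{2}f\}$, at least $n\ge 6$), and the correct reading of your analysis is a disproof of the literal statement rather than an incomplete proof of it.

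Second, the reduction you build before reaching the obstruction is correct and is genuinely more than the paper establishes. The equality $l_{S}(r^{m}f)=\min\{l_{S}(r^{m}),l_{S}(r^{m-a}),l_{S}(r^{m-b})\}+1$ is right: the upper bound is observation (iii) (with the $c=0$ case included), and the lower bound holds because a minimal word for the reflection $r^{m}f$ has odd length and dropping its final letter $r^{c}f$ leaves a word representing $r^{m-c}$. The identity $l_{S}(r^{k})=2\delta(k)$, with $\delta$ the distance in the circulant graph on $\mathbb{Z}/n\mathbb{Z}$ with connection set $\{\pm a,\pm b,\pm(a-b)\}$, is also correct, as is the resulting equivalence: the conjecture holds for a given $(a,b,n)$ if and only if the antipodal set $A=\{k:\delta(k)=\Delta\}$ contains no triangle of the form $\{m,m-a,m-b\}$. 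This pinpoints exactly why the small cases fail (there $\Delta=1$, so $A$ is all of $\mathbb{Z}/n\mathbb{Z}\setminus\{0\}$) and exactly what would have to be proved for large $n$; it would be a worthwhile reformulation of the conjecture with the degenerate cases excluded. The only caveat is your closing remark that triangle-freeness ``can be checked by hand'' for $S_{1}$ and $S_{2}$: that needs the qualifier ``for $n$ sufficiently large,'' since $S_{1}$ with $n\in\{4,5\}$ is precisely where it breaks.
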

      There is strong evidence for this conjecture. Now we proceed to prove the aforementioned lemmas.  Based on the preceding observations, we only need to compute the lengths $r^{m}$ with $0 < m \leq \lfloor \frac{n}{2} \rfloor$. If the conjecture is correct then in some cases we can tighten the bound by one.

\begin{lemma}\label{3fliplem1}
	Let $S=\{f,rf,r^{2}f\}$.  Then for any $n> 2$, $S$ generates $D_n$ and $\lambda_{1}(D_n,S) \leq \lfloor \frac{n}{2} \rfloor  + 1$.
\end{lemma}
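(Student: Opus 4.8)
The plan is to reduce the computation of $\lambda_{1}$ to a single word-length problem for reflections, and then solve that problem by a telescoping identity. First, $S$ generates $D_{n}$ for $n>2$: the pair $\{f,rf\}\subset S$ already generates, since $f\cdot rf=r^{-1}$ has order $n$, so Theorem \ref{DnPres1b} applies. Next I would observe that every letter of $S$ is an involution, hence for any $g\in D_{n}$ and $s\in S$ the conjugate $gsg^{-1}$ is again an involution, and being a conjugate of a reflection it must itself be a reflection $r^{j}f$ (it can never be the central involution $r^{n/2}$). Conversely every reflection is realized as such a conjugate, since $r^{k}f r^{-k}=r^{2k}f$ ranges over the even reflections and $r^{k}(rf)r^{-k}=r^{2k+1}f$ over the odd ones. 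Therefore $\lambda_{1}(D_{n},S)=\max_{0\le j\le n-1} l_{S}(r^{j}f)$, and it suffices to show $l_{S}(r^{j}f)\le \lfloor n/2\rfloor+1$ for every $j$.

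The key tool is that any word in $S$ has the form $\prod_{i=1}^{t} r^{a_{i}}f$ with each $a_{i}\in\{0,1,2\}$, and using the relation $fr^{a}=r^{-a}f$ from (\ref{eq:noncommute}) it telescopes to $r^{\sigma}f^{\,t}$, where $\sigma=\sum_{i=1}^{t}(-1)^{i-1}a_{i}$. Thus words of odd length $t$ are exactly the reflections, and with $t=2u-1$ letters the reachable exponents $\sigma$ fill the full integer interval $[-2(u-1),\,2u]$ (the $u$ positively-signed letters contribute any value in $[0,2u]$, the $u-1$ negatively-signed ones any value in $[0,2(u-1)]$). Reducing modulo $n$, the exponent $j$ is reachable with $t=2u-1$ letters if and only if $j\le 2u$ (reaching $j$ directly) or $n-j\le 2(u-1)$ (reaching $j-n$ by wrap-around). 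Hence $l_{S}(r^{j}f)=2\min\!\big\{\lceil j/2\rceil,\ \lceil (n-j)/2\rceil+1\big\}-1$ (with the trivial correction $u\ge 1$ at $j=0$), which is the precise, constructive form of observation (iii) with the optimal generator $r^{a}f$ chosen.

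It then remains to bound this quantity uniformly. Since $\lceil j/2\rceil$ increases in $j$ while $\lceil(n-j)/2\rceil+1$ decreases, the minimum is maximized near $j=\lfloor n/2\rfloor$; I would verify, by a short parity analysis splitting into the residues $n\equiv 0,1,2,3\pmod 4$, that the maximal value is exactly $\lfloor n/2\rfloor+1$ (attained, for instance, near $j=\lfloor n/2\rfloor+1$ when $4\mid n$) and never exceeds it. I expect the main obstacle to lie precisely in this last step: the two ways of reaching an exponent — additively through the positively-signed letters and negatively through wrap-around modulo $n$ — must be balanced, and a naive one-directional estimate overshoots the target bound by one for certain odd $n$ (e.g. it would suggest length $\lfloor n/2\rfloor+2$ for $r^{(n+1)/2}f$ when $(n-1)/2$ is odd, yet the cheaper direction keeps the true length at $\lfloor n/2\rfloor+1$ or below). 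Always selecting the shorter of the two directions is what forces the stated bound, and carefully accounting for this across the parity cases is the delicate part of the argument.
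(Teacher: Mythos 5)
Your proof is correct, but it takes a genuinely different route from the paper's. The paper argues purely constructively: it sets $A=(rf)(f)=r$ and $B=(r^{2}f)(f)=r^{2}$, writes $r^{m}=B^{m/2}$ ($m$ even) or $r^{m}=B^{(m-1)/2}A$ ($m$ odd) to bound $l_{S}(r^{m})$ by $m$ or $m+1$ for $0<m\le\lfloor n/2\rfloor$, and then treats reflections via $r^{p}f=r^{p-2}(r^{2}f)$, invoking the inverse-symmetry and flip-from-rotation observations (ii)--(iii) preceding the lemma; no exact lengths are ever computed. You instead prove the exact reduction $\lambda_{1}(D_{n},S)=\max_{j}l_{S}(r^{j}f)$ (conjugates of the generators are reflections, and every reflection is $r^{k}fr^{-k}$ or $r^{k}(rf)r^{-k}$), then use the normal form $r^{\sigma}f^{t}$ with $\sigma=\sum_{i}(-1)^{i-1}a_{i}$, $a_{i}\in\{0,1,2\}$, to characterize the exponents reachable in $t=2u-1$ letters as the interval $[-2(u-1),2u]$ reduced mod $n$, which yields the closed formula $l_{S}(r^{j}f)=2\min\bigl\{\lceil j/2\rceil,\,\lceil (n-j)/2\rceil+1\bigr\}-1$; all of these steps check out. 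The one step you defer, the final maximization, does close in one line: if the minimum $m$ at some $j$ satisfied $2m-1\ge\lfloor n/2\rfloor+2$, then $\lceil j/2\rceil\ge m$ and $\lceil (n-j)/2\rceil+1\ge m$ force $j\ge 2m-1$ and $n-j\ge 2m-3$, hence $n\ge 4m-4$, while $2m\ge\lfloor n/2\rfloor+3$ forces $4m-4\ge 2\lfloor n/2\rfloor+2\ge n+1$, a contradiction. Your route buys strictly more than the paper's: the formula gives the exact value of $\lambda_{1}$ for every $n$, showing that the bound $\lfloor n/2\rfloor+1$ is attained precisely when $n\equiv 0,1\pmod 4$ and that $\lambda_{1}\le\lfloor n/2\rfloor$ when $n\equiv 2,3\pmod 4$ (reflection lengths are odd, and $\lfloor n/2\rfloor+1$ is then even), whereas the paper obtains only the upper bound. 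What the paper's cruder method buys is portability: its template transfers essentially verbatim to $S=\{f,rf,r^{3}f\}$ in Lemma \ref{3fliplem2}, whereas your interval characterization would need reworking there, since sums of exponents from $\{0,1,3\}$ do not always fill an interval.
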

\begin{proof} For $S = \{f,rf,r^{2}f\}$,  let $A = \left(rf\right)\left(f\right) = r$ and $B = \left(r^{2}f\right)\left(f\right) = r^{2}$. Now, for an integer $0<m \leq \lfloor \frac{n}{2} \rfloor$ we split into two cases, $m$ even, and $m$ odd. If $m$ is even, then
      \[ r^{m} = B^{\frac{m}{2}}, \]
      which has length $l \leq 2 \frac{m}{2} \leq \lfloor \frac{n}{2} \rfloor$. On the other hand, if $m$ is odd then
      \[ r^{m} = r^{m-1}r = B^{\frac{m-1}{2}}A, \]
      where we have used the fact that $m$ odd implies $m-1$ is even. Now $B^{\frac{m-1}{2}}$ has length less or equal to $2 \frac{m-1}{2} = m-1$ and $A$ has length two. Thus we see that in case $m$ is odd, then $r^{m}$ has length $l \leq m+1 \leq \lfloor \frac{n}{2} \rfloor + 1$.

      Using the points noted above we can already see that for this generating set we have that $\lambda_{1} \leq \lfloor \frac{n}{2} \rfloor + 2$. In this case, we can tighten this bound by one.

      Next, consider $r^{p}f$. Again we split computations into the even and odd cases. If $m$ is even, then write
      \[ r^{p}f = r^{p-2}r^{2}f. \]
      Now since $p-2$ is even, by the previous result with $m=p-2$ we see that the length of $r^{p}f$ satisfies $l\leq p-2 + 1 = p-1 \leq \lfloor \frac{n}{2} \rfloor - 1$. If $p$ is odd then
      \[ r^{p}f = r^{p-2}r^{2}f, \]
      with $p-2$ odd. Thus, the length of $r^{p}f$ will satisfy $l \leq p-2 + 1 = p-1 \leq \lfloor \frac{n}{2} \rfloor + 1 - 1 = \lfloor \frac{n}{2} \rfloor$.

      This finally shows that for any value of $n > 2$ if we take the generating set $S = \{f,rf,r^{2}f\}$ for $D_{n}$ then $\lambda_{1} \leq \lfloor \frac{n}{2} \rfloor + 1$.
\end{proof}

\begin{lemma}\label{3fliplem2}
	Let $S=\{f,rf,r^{3}f\}$.  Then for any $n> 3$, $S$ generates $D_n$ and $\lambda_{1}(D_n,S) \leq \lfloor \frac{n}{2} \rfloor  + 1$.
\end{lemma}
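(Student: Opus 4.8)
The plan is to mirror the argument of Lemma \ref{3fliplem1}, exploiting the three observations (i)--(iii) recorded just before it. First I would note that $\{f,rf\}\subset S$, so $S$ generates $D_n$ for every $n$, and that for $n>3$ the elements $f,rf,r^{3}f$ are genuinely distinct (since $r^{3}f=f$ or $r^{3}f=rf$ would force $n\mid 3$ or $n\mid 2$), so $S$ is an order-three symmetric generating set. By observation (i) the analysis depends only on how powers of $r$ decompose, and by (ii)--(iii) it suffices to bound $l_S(r^{m})$ for $0<m\le\lfloor \frac{n}{2}\rfloor$ and then add one to reach the involution elements.

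The key new ingredient, compared with $S=\{f,rf,r^{2}f\}$, is that from $S=\{f,rf,r^{3}f\}$ one obtains \emph{three} consecutive powers of $r$ as products of two involutions: $r=(rf)(f)$, $r^{3}=(r^{3}f)(f)$, and --- using $fr=r^{-1}f$ from (\ref{eq:noncommute}) --- also $r^{2}=(r^{3}f)(rf)$, each of length at most $2$. I would then expand a general power greedily in base three: writing $m=3q+s$ with $s\in\{0,1,2\}$ and $q=\lfloor m/3\rfloor$, set $r^{m}=(r^{3})^{q}r^{s}$, whose length is at most $2q$ when $s=0$ and at most $2q+2$ otherwise; in all cases $l_S(r^{m})\le 2\lceil m/3\rceil$. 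A short arithmetic check shows $2\lceil m/3\rceil\le\lfloor \frac{n}{2}\rfloor$ for all $0<m\le\lfloor \frac{n}{2}\rfloor$ whenever $\lfloor \frac{n}{2}\rfloor\ge 2$, i.e.\ for all $n>3$, with the tight boundary cases $n=4,5$ verified directly. By the inverse property this yields $l_S(r^{j})\le\lfloor \frac{n}{2}\rfloor$ for \emph{every} integer $j$.

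To pass to the involutions I would apply observation (iii): every involution element is of the form $r^{m}f$, and $r^{m}f=(r^{m})f=r^{m-1}(rf)=r^{m-3}(r^{3}f)$, so $l_S(r^{m}f)\le\min\{l_S(r^{m}),l_S(r^{m-1}),l_S(r^{m-3})\}+1\le\lfloor \frac{n}{2}\rfloor+1$. Since all three generators are involutions, each conjugate $g^{-1}sg$ is again an involution, and a conjugate of a reflection is always a reflection $r^{k}f$ (never the central $r^{n/2}$); hence $\lambda_{1}(D_n,S)=\max_{k}l_S(r^{k}f)\le\lfloor \frac{n}{2}\rfloor+1$, which is the claim.

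The main obstacle --- and the reason the availability of $r^{2}$ at length two is decisive --- is holding the power bound at $\lfloor \frac{n}{2}\rfloor$ rather than $\lfloor \frac{n}{2}\rfloor+1$, so that adding one for the trailing involution does not overshoot the target. With only the jumps $r^{\pm1}$ and $r^{\pm3}$, residues $2\bmod 3$ would force an extra $r^{\pm1}$ and push certain powers to length $\lfloor \frac{n}{2}\rfloor+1$, requiring the parity-type tightening used in Lemma \ref{3fliplem1}; the identity $r^{2}=(r^{3}f)(rf)$ eliminates this case and lets the involution bound follow immediately. The only remaining care is the residue bookkeeping and the $\pm3$ wraparound modulo $n$, both of which are absorbed by the inverse property invoked above.
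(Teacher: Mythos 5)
Your proof is correct, and it rests on the same key ingredient as the paper's: the observation that, besides $r=(rf)(f)$ and $r^{3}=(r^{3}f)(f)$, the product $r^{2}=(r^{3}f)(rf)$ is also available as a length-two word, followed by the same overall reduction --- bound $l_{S}(r^{m})$ for $0<m\le\lfloor \frac{n}{2}\rfloor$, extend to all powers by the inverse property, and add one to reach the involutions, noting that conjugates of the generators are always reflections. Where you genuinely differ is the decomposition of $r^{m}$: the paper splits on parity, writing $r^{m}=C^{m/2}$ for $m$ even and $r^{m}=C^{(m-3)/2}B$ for odd $m>3$ (with $m=1,3$ as exceptional short cases), where $B=r^{3}$ and $C=r^{2}$, which yields $l_{S}(r^{m})\le m$; you instead expand greedily in base three, $r^{m}=B^{\lfloor m/3\rfloor}r^{s}$ with $s\in\{0,1,2\}$, which yields the sharper estimate $l_{S}(r^{m})\le 2\lceil m/3\rceil$. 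Your route buys an asymptotically better bound on the rotation lengths (roughly $2m/3$ instead of $m$) at the cost of the extra arithmetic verification $2\lceil M/3\rceil\le M$ for $M=\lfloor \frac{n}{2}\rfloor\ge 2$, which you correctly flag and which does hold; the paper's parity split keeps that arithmetic trivial and makes the final comparison with $\lfloor \frac{n}{2}\rfloor$ immediate. You also spell out a step the paper leaves implicit in its observation (iii) and the surrounding remarks, namely that every conjugate $g^{-1}sg$ is a reflection $r^{k}f$ and never the central element $r^{n/2}$, so the $+1$ bound on reflections is exactly what $\lambda_{1}$ requires. Both arguments are complete; yours is marginally stronger as a statement about word lengths of rotations, the paper's marginally shorter to check.
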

\begin{proof} For $S = \{f,rf,r^{3}f\}$, let $A = \left(rf\right)\left(f\right) = r$, $B = \left(r^{3}f\right)\left(f\right) = r^{3}$, and $C = \left(r^{3}f\right)\left(rf\right) = r^{2}$.  For an integer $0<m \leq \lfloor \frac{n}{2} \rfloor$ we again split the calculations into two cases, $m$ even, and $m$ odd. If $m$ is even, then
      \[ r^{m} = C^{\frac{m}{2}}, \]
      which has length $l \leq 2 \frac{m}{2} \leq \lfloor \frac{n}{2} \rfloor$. On the other hand, if $m$ is odd then either $m=1$, in which case $r^{m} = r$ has length 2; $m=3$, in which case $r^{m} = r^{3}$ has length 2; or $m > 3$. If $m > 3$ is odd, then $m-3$ is even. Thus, since
      \[ r^{m} = r^{m-3}r^{3} = r^{m-3}B,\]
      and $m-3$ is even, we have that $r^{m}$ has length $l \leq m-3 + 2 \leq m - 1 \leq \lfloor \frac{n}{2} \rfloor - 1$.

      From this we already see that if $n > 3$, then with $S = \{f,rf,r^{3}f\}$ generating $D_{n}$ we have that $\lambda_{1} \leq \lfloor \frac{n}{2} \rfloor  + 1$.
\end{proof}

\noindent {\bf Remark: }There are examples to illustrate that, at least for some values of $n$, this the minimal upper bound. For a specific example take the following.
\begin{examp}
	Consider $n=3$ with $S = \{f,rf,r^{2}f\}$. Then
 	$D_{3} = \{1,r=(rf)(f),r^{2}=(r^{2}f)(f),f,rf,r^{2}f\}$ and thus $\lambda_{1} = 2 = \lfloor \frac{n}{2} \rfloor  + 1$.
\end{examp}

We currently have not discovered a way to construct, for \emph{any} $n$, a set of the form $S = \{f, r^af, r^bf\}$ such that $\lambda_1(D_n,S) \leq \lfloor \frac{n}{2} \rfloor  + 1$.

To conclude this section, we now obtain a bound on $\lambda_{2}(D_{n},S)$.

\begin{theorem}
  Let $S = \{f,r^{a}f,r^{b}f\}$, then $\lambda_{2}(D_{n},S) = 2$.
\end{theorem}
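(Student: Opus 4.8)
The plan is to show that conjugating a product $ss'$ of two generators from $S=\{f,r^af,r^bf\}$ never increases length beyond $2$, and that the value $2$ is actually attained. The key structural fact is that every element of $S$ is an involution, so each product $ss'$ of two distinct generators is a rotation (an element of the order $n$ cyclic subgroup), while $ss=1$. Indeed, $(r^if)(r^jf)=r^{i-j}$, so $ff=1$, $f\,r^af=r^{-a}$, $f\,r^bf=r^{-b}$, and so on. The only interesting sub-case is $s\neq s'$, where $ss'=r^k$ for the appropriate exponent $k\in\{\pm a,\pm b,\pm(a-b)\}$.

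First I would dispose of the trivial sub-case $s=s'$: since each generator is symmetric (an involution), $g^{-1}ssg=g^{-1}\cdot 1\cdot g=1$, contributing length $0$. Next, for $s\neq s'$ the product $ss'$ is some power $r^k$ of $r$, and the central computation is to observe how conjugation acts on rotations. Using the identities (\ref{eq:rInverses})--(\ref{eq:flipSymmetry}), conjugation of a rotation $r^k$ by any group element yields either $r^k$ or $r^{-k}$: explicitly, if $g=r^i$ then $g^{-1}r^kg=r^k$, while if $g=r^if$ then $g^{-1}r^kg=r^{-k}$. In either case the conjugate is again a rotation of the form $r^{\pm k}$, which is itself expressible as a product of exactly the two generators whose product gave $r^k$ (or their inverses), hence has length at most $2$. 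This establishes the upper bound $\lambda_{2}(D_{n},S)\leq 2$.

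To get equality I would exhibit a single conjugate of length exactly $2$. Taking $s=f$ and $s'=r^af$ gives $ss'=r^{-a}$, and conjugating by the identity ($g=1$) already yields $r^{-a}=r^{n-a}$, which cannot be written as a single letter from $S$ (every letter of $S$ is an involution, whereas $r^{n-a}$ has order exceeding two whenever $r^a$ generates the order $n$ cyclic subgroup and $n>2$). Hence $l_S(r^{n-a})=2$, forcing $\lambda_2(D_n,S)=2$. This argument is uniform across all the presentations (\ref{DnPres4})--(\ref{DnPres7}) since it only uses that the three generators are involutions outside the cyclic subgroup.

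The computation here is genuinely routine; the only point requiring a little care—and hence the main obstacle—is verifying that a conjugate of a rotation is always again a rotation expressible in length at most $2$, rather than inadvertently producing an involution of large length. This is handled cleanly by the two-line case analysis on whether the conjugating element $g$ lies in the cyclic subgroup or is itself an involution, combined with the observation that $r^{\pm k}=ss'$ or $r^{\pm k}=s's$ reuses the very generators that formed the product. No appeal to Lemma \ref{lengthLemma} is needed, because the bound $2$ is already invariant under the relabelling of generators; this is why the statement holds for the general set $\{f,r^af,r^bf\}$ without restricting to a particular concrete realization.
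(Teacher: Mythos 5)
Your overall strategy is the same as the paper's: a direct computation showing that every conjugate $gss'g^{-1}$ with $s,s'\in S$ lies in the set $\{1,r^{\pm a},r^{\pm b},r^{\pm(a-b)}\}$, each element of which has length at most $2$. Your upper-bound argument (conjugation by a rotation fixes $r^k$, conjugation by a reflection sends it to $r^{-k}$, and $r^{\pm k}$ is again the product of the same two letters in one order or the other) is correct, and is essentially an expanded version of the paper's one-line proof.

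However, your justification of the lower bound $\lambda_2(D_n,S)\geq 2$ is flawed as stated. You claim $r^{n-a}$ is not a single letter because it ``has order exceeding two whenever $r^a$ generates the order $n$ cyclic subgroup and $n>2$.'' The theorem does not assume $r^a$ generates the cyclic subgroup; in the situations of Theorem \ref{threeFlipsMain} (\ref{DnPres5})--(\ref{DnPres7}) it need not (e.g.\ $S=\{f,r^{3}f,r^{5}f\}$ in $D_{30}$), so your stated sufficient condition simply does not apply there. Worse, the case $a=\frac{n}{2}$ is permitted (e.g.\ $S=\{f,r^{3}f,r^{2}f\}$ generates $D_{6}$), and there $r^{n-a}=r^{\frac{n}{2}}$ has order exactly two, so the claim ``order exceeding two'' is false even though the conclusion still holds. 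The repair is immediate and removes any hypothesis about generation: $r^{n-a}$ is a nonidentity element of the cyclic subgroup $\langle r\rangle$, whereas every letter of $S$ is a reflection of the form $r^{i}f$; hence $r^{n-a}\notin S\cup\{1\}$, so $l_{S}(r^{n-a})\geq 2$, and the upper bound gives equality. With that one-line substitution your proof is complete and agrees with the paper's.
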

\begin{proof}
Direct computation shows that $gss'g^{-1}$ with $s,s'\in S$ produces each of $1,r,r^{-1},r^{a},r^{-a},r^{b},r^{-b},r^{a-b},r^{b-a}$ and the maximal length of these is two.
\end{proof}

\section{Conclusion}
\label{conclusion}

By classifying all possible finite presentations of the dihedral groups with symmetric generating sets of cardinality less than or equal to three, and establishing that the values of the quantities $\lambda_{1}(G,S)$ and $\lambda_{2}(G,S)$ defined in \cite{MoultonSteel2012} are preserved by certain automorphism that preserves relations, we have proven a family of bounds for $\lambda_{1}(G,S)$ and $\lambda_{2}(G,S)$ with $G=D_{n}$ and $S$ one of several different generating sets. These results serve to illustrate some of the characteristics of $\lambda_{1}(G,S)$ and $\lambda_{2}(G,S)$ that where merely hinted at in \cite{MoultonSteel2012}. One novel feature of our work is the utilization of the group presentation point of view. Thus, our approach may be adapted to other finitely presented groups. This is of interest in both theory and in applications. For example, since finitely presented groups play an important role in both combinatorial genomics and formal language theory, see \emph{e.g.}\ \cite{chiswell}, one may expect that the study of the quantities $\lambda_{1}(G,S)$ and $\lambda_{2}(G,S)$ for finitely presented groups should be relevant to those fields.

Of course there is more that one can say about $\lambda_{1}(G,S)$ and $\lambda_{2}(G,S)$ for $G=D_{n}$. First off, what if $S$ has cardinality greater than three? Besides the trivial case with $S=D_{n}-\{1\}$, already discussed in general in the introduction, one may consider an additional ``extreme'' case with $S = \{f,rf,\ldots,r^{n-1}f\}$. It is easy to see for this choice of $S$ that $\lambda_{1}(D_{n},S)=1$ and $\lambda_{2}(D_{n},S)=2$. Then, probably the most interesting remaining cases are when $3<|S|<n$. It is likely that each of these cases can be tackled using the approaches we have developed here. However, the computations quickly become prohibitively tedious. Thus, it is desirable, if possible, to have a more unified approach to computing $\lambda_{1}(G,S)$ and $\lambda_{2}(G,S)$, at least when $G=D_{n}$.

\section{Appendix}
\label{NumThrConditions}

Consider a dihedral group $D_{n}$ described as in equation (\ref{eq:DnDescription}). The goal of this appendix is to establish a number-theoretic condition on the exponents $a,b$ in $S = \{r^{a}f,r^{b}f\}$ to distinguish how many pairs it takes to generate. This allows one to realize the presentations described in section \ref{classSectionA} in a more concrete manner. More significantly, this will allow for the direct computation of bounds for the quantities $\lambda_{1}(D_{n})$ and $\lambda_{2}(D_{n})$.

We begin with the observation that, given $S = \{r^{a}f,r^{b}f\}$, we can change notation by defining $r^{\tilde{a}} = r^{b-a}$ and $\tilde{f} = r^{a}f$, then $r^{b}f = r^{b-a}r^{a}f = r^{\tilde{a}}\tilde{f}$ and thus $S = \{\tilde{f},r^{\tilde{a}}\tilde{f}\}$. In light of this observation, to generate $D_{n}$ it suffices to establish conditions on an integer $a$ for a subset $S = \{f,r^{a}f\}$ of $D_{n}$, with $f^{2}=1$ and $r^{a}$ an element of the order $n$ cyclic subgroup of $D_{n}$. This will be the case if and only if the product $(r^{a}f)(f) = r^{a}$ generates an order $n$ cyclic subgroup, which will happen if and only if there is a solution to the equation $xa \equiv 1 \mod n$. From this we obtain the following result.

\begin{lemma}
\label{numCharOne}
A subset $S = \{f,r^{a}f\}$ of $D_{n}$, with $f^{2}=1$ and $r^{a}$ an element of the order $n$ cyclic subgroup of $D_{n}$, will generate $D_{n}$ if and only if $a$ is relatively prime with $n$.
\end{lemma}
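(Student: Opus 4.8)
The discussion immediately preceding the statement already isolates the crux, so the plan is to make that reduction rigorous in three linked equivalences: $S=\{f,r^{a}f\}$ generates $D_{n}$ if and only if $r^{a}$ generates the full order $n$ cyclic subgroup $\langle r\rangle$, which in turn holds if and only if $\gcd(a,n)=1$. The first step is to show that $\langle S\rangle=\langle r^{a},f\rangle$. Since $f^{2}=1$, the product $(r^{a}f)(f)=r^{a}$ lies in $\langle S\rangle$, giving $\langle S\rangle\supseteq\langle r^{a},f\rangle$; the reverse containment is immediate because both $f$ and $r^{a}f=(r^{a})f$ already lie in $\langle r^{a},f\rangle$.

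Next I would pin down the structure of $\langle r^{a},f\rangle$. Using $fr^{a}=r^{-a}f$, which follows from (\ref{eq:noncommute}), every word in $r^{a}$ and $f$ can be pushed into the normal form $r^{ka}$ or $r^{ka}f$. Thus $\langle r^{a},f\rangle$ consists precisely of the $|\langle r^{a}\rangle|$ rotations $r^{ka}$ together with the $|\langle r^{a}\rangle|$ distinct reflections $r^{ka}f$, so $|\langle r^{a},f\rangle|=2\,|\langle r^{a}\rangle|$. Since $|D_{n}|=2n$, this subgroup equals all of $D_{n}$ exactly when $|\langle r^{a}\rangle|=n$, that is, exactly when $r^{a}$ generates $\langle r\rangle$.

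Finally I would discharge the number-theoretic step. In $\langle r\rangle\cong\mathbb{Z}/n\mathbb{Z}$ the order of $r^{a}$ is $n/\gcd(a,n)$, so $\langle r^{a}\rangle=\langle r\rangle$ if and only if $\gcd(a,n)=1$. Equivalently, as noted before the statement, $\langle r^{a}\rangle=\langle r\rangle$ holds if and only if $r$ is itself a power of $r^{a}$, i.e. if and only if the congruence $xa\equiv 1\pmod{n}$ is solvable, and by B\'ezout's identity this occurs precisely when $\gcd(a,n)=1$. Chaining the three equivalences yields the lemma.

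There is no serious obstacle here, as the content ultimately rests on the standard fact that a residue generates $\mathbb{Z}/n\mathbb{Z}$ exactly when it is coprime to $n$. The only point demanding a little care is the middle step: one must verify that pushing all occurrences of $f$ to the right via $fr^{a}=r^{-a}f$ really does produce the claimed normal form, and that the resulting rotations and reflections are distinct, so that the order count $|\langle r^{a},f\rangle|=2\,|\langle r^{a}\rangle|$ is exact rather than merely an upper bound.
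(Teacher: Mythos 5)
Your proposal is correct and follows essentially the same route as the paper, which reduces generation of $D_{n}$ by $S=\{f,r^{a}f\}$ to whether $(r^{a}f)(f)=r^{a}$ generates the order $n$ cyclic subgroup, equivalently whether $xa\equiv 1\pmod{n}$ is solvable, i.e.\ $\gcd(a,n)=1$. You simply make the paper's informal reduction rigorous by verifying $\langle S\rangle=\langle r^{a},f\rangle$ and the order count $|\langle r^{a},f\rangle|=2\,|\langle r^{a}\rangle|$, which is a worthwhile but not fundamentally different elaboration.
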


Now it is easy to apply this lemma to obtain conditions on a subset $S=\{f,r^{a}f,r^{b}f\}$ of $D_{n}$, which correspond to the situations described abstractly in Theorem \ref{threeFlipsMain} (\ref{DnPres4})--(\ref{DnPres6}), to determine when one, two, or all three pairs from $S$ generate $D_{n}$. It only remains to give a concrete realization of the situation described abstractly by Theorem \ref{threeFlipsMain} (\ref{DnPres7}).

\begin{theorem}
\label{numCharTwo}
For a subset $S=\{f,r^{a}f,r^{b}f\}$ of $D_{n}$, with $f^{2}=1$ and $r^{a},r^{b}$ belonging to the order $n$ cyclic subgroup of $D_{n}$, it is the case that none of the pairs $\{f,r^{a}f\},\{f,r^{b}f\},\{r^{a}f,r^{b}f\}$ generate $D_{n}$ and the triple $\{f,r^{a}f,r^{b}f\}$ does, if and only if the following hold:
\begin{enumerate}
 \item $a$ is not relatively prime with $n$
 \item $b$ is not relatively prime with $n$
 \item $b-a$ is not relatively prime with $n$
 \item $a,b$ are relatively prime with one another.
\end{enumerate}
\end{theorem}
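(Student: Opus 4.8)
The plan is to split the biconditional into its two natural halves --- ``none of the three pairs generates $D_n$'' and ``the triple generates $D_n$'' --- and reduce each half to a result already established earlier in the paper, namely Lemma \ref{numCharOne} for the pairs and Lemma \ref{subgroupLemma} for the triple.

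First I would dispatch the pairwise conditions. By Lemma \ref{numCharOne}, the pair $\{f,r^{a}f\}$ generates $D_{n}$ if and only if $\gcd(a,n)=1$, and likewise $\{f,r^{b}f\}$ generates if and only if $\gcd(b,n)=1$; these are exactly the negations of conditions (1) and (2). For the remaining pair $\{r^{a}f,r^{b}f\}$ I would invoke the change of variables from the opening of this appendix, writing $\tilde{f}=r^{a}f$ and $r^{\tilde{a}}=r^{b-a}$ so that $\{r^{a}f,r^{b}f\}=\{\tilde{f},r^{b-a}\tilde{f}\}$; Lemma \ref{numCharOne} then shows this pair generates if and only if $\gcd(b-a,n)=1$, the negation of condition (3). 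Hence conditions (1)--(3) hold simultaneously precisely when no pair from $S$ generates $D_{n}$.

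Next I would characterize when the full triple generates. By Lemma \ref{subgroupLemma} together with the reduction immediately following it, $\{f,r^{a}f,r^{b}f\}$ generates $D_{n}$ if and only if $H_{1}H_{2}=\langle r\rangle$, where $H_{1}=\langle r^{a}\rangle$ and $H_{2}=\langle r^{b}\rangle$. Working inside the cyclic group $\langle r\rangle\cong\mathbb{Z}/n\mathbb{Z}$, one has $H_{1}=\langle r^{\gcd(a,n)}\rangle$ and $H_{2}=\langle r^{\gcd(b,n)}\rangle$, so their product is the subgroup $\langle r^{\gcd(a,b,n)}\rangle$. Therefore the triple generates $D_{n}$ exactly when $\gcd(a,b,n)=1$.

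The step I expect to be the main obstacle is reconciling $\gcd(a,b,n)=1$ with condition (4). The sufficiency direction is immediate: if $\gcd(a,b)=1$ then certainly $\gcd(a,b,n)=1$, so conditions (1)--(4) force the triple to generate while no pair does. The hard part will be the converse, where one must argue that, in the presence of conditions (1)--(3), the generating condition $\gcd(a,b,n)=1$ genuinely forces $a$ and $b$ to be coprime. I would scrutinize this carefully, since the invariant produced by the subgroup computation is $\gcd(a,b,n)$ rather than $\gcd(a,b)$ itself; in particular I would verify whether condition (4) as stated is truly equivalent to the subgroup criterion or whether it should be read as $\gcd(a,b,n)=1$, as the interplay between the three pairwise coprimality failures and the common divisors of $a$ and $b$ is precisely the delicate point on which the equivalence hinges.
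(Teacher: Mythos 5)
Your reduction is sound, and the worry you flag at the end is not a defect of your argument but a genuine flaw in the statement as written: the forward implication is false if condition (4) is read literally as $\gcd(a,b)=1$. As your subgroup computation shows, the triple generates $D_n$ exactly when $\gcd(a,b,n)=1$, and this is strictly weaker than $\gcd(a,b)=1$ even in the presence of conditions (1)--(3). Concretely, take $n=60$, $a=21$, $b=35$ (both less than $n$). Then $\gcd(21,60)=3$, $\gcd(35,60)=5$, and $\gcd(35-21,60)=\gcd(14,60)=2$, so by Lemma \ref{numCharOne} (and the change of variables for the third pair) no pair generates $D_{60}$; yet $\gcd(21,35,60)=\gcd(7,60)=1$, so the triple does generate --- indeed $21\cdot 1+35\cdot 8=301\equiv 1 \pmod{60}$, so $r\in\langle r^{21},r^{35}\rangle$ --- while $\gcd(21,35)=7\neq 1$, violating (4). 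So the converse you were scrutinizing cannot be completed; the correct form of condition (4) is $\gcd(a,b,n)=1$ (equivalently, that suitable integer representatives of $a$ and $b$ modulo $n$ can be chosen coprime), and with that reading your proof is complete.

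For comparison, the paper's own proof of the forward direction commits exactly the error you anticipated: from $r\in\langle r^{as_1+bt_1}, r^{as_2+bt_2}f\rangle$ it concludes that there exist $s_1,t_1$ with $as_1+bt_1=1$. But membership of $r$ in a subgroup of the order-$n$ cyclic group only yields the congruence $as_1+bt_1\equiv 1\pmod{n}$, i.e.\ $as_1+bt_1+un=1$ for some integer $u$, which is precisely $\gcd(a,b,n)=1$ and not $\gcd(a,b)=1$. The backward direction is fine in both your write-up and the paper's, since $\gcd(a,b)=1$ does imply $\gcd(a,b,n)=1$; it is only the necessity of (4) that fails. Your route through Lemma \ref{subgroupLemma} and the identity $H_1H_2=\langle r^{\gcd(a,b,n)}\rangle$ is the cleaner one, since it isolates the correct invariant from the start.
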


{\bf Proof:} Suppose that none of the pairs $\{f,r^{a}f\},\{f,r^{b}f\},\{r^{a}f,r^{b}f\}$ generate $D_{n}$, but the triple $\{f,r^{a}f,r^{b}f\}$ does. It is then clear from Lemma \ref{numCharOne} that none of $a$, $b$, and $b-a$ are relatively prime with $n$. Now, $\langle f,r^af,r^bf \rangle \subseteq \langle r^{as_1+bt_1},r^{as_2+bt_2}f \rangle$. Thus, $r \in \langle r^{as_1+bt_1},r^{as_2+bt_2}f \rangle$ implies that there exist $s_1,t_1$ such that $as_1+bt_1=1$, i.e. $a$ and $b$ are relatively prime.

On the other hand, if none of $a$, $b$, and $b-a$ are relatively prime with $n$, then it is clear from Lemma \ref{numCharOne} that none of the pairs $\{f,r^{a}f\},\{f,r^{b}f\},\{r^{a}f,r^{b}f\}$ can generate $D_{n}$. Suppose that $a,b$ are relatively prime. Then there exist integers $k,l$ such that $ka + lb = 1$ thus $(r^{a}f f)^{k}(r^{b}f f)^{l} = r^{ka + lb} = r$, from which it follows that $D_{n}$ can be generated.

\qed

\section*{Acknowledgements}
The authors extend their sincere gratitude to the editor and anonymous referee for valuable suggestions that have improved this manuscript. 

\bibliographystyle{elsarticle-num}
\bibliography{bioGroups}

\end{document}